\newtheorem*{thm*}{Main Theorem}
\newtheorem{theorem}{Theorem}[section]
\newtheorem{corollary}[theorem]{Corollary}
\newtheorem*{corollary*}{Corollary}
\newtheorem{lemma}[theorem]{Lemma}
\newtheorem*{lemma*}{Lemma}
\newtheorem*{proposition*}{Proposition}
\newtheorem{remark}[theorem]{Remark}
\newtheorem*{remark*}{Remark}
\newtheorem{definition}[theorem]{Definition}
\newtheorem*{definition*}{Definition}
\newtheorem{example}[theorem]{Example}
\newtheorem*{example*}{Example}
\newcommand{\Ext}{\mbox{\rm Ext}}
\newcommand{\Hom}{\mbox{\rm Hom}}
\newcommand{\gen}{\mbox{\rm gen}}
\newcommand{\End}{\mbox{\rm End}}
\newcommand{\Ker}{\mbox{\rm Ker}}
\newcommand{\Coker}{\mbox{\rm Coker}}
\newcommand{\id}{\mbox{\rm id}}
\newcommand{\pd}{\mbox{\rm pd}}
\newcommand{\Mod}{\mbox{\rm Mod}}
\newcommand{\add}{\mbox{\rm add}}
\newcommand{\fd}{\mbox{\rm fd}}
\newcommand{\gd}{\mbox{\rm gl.dim}}
\newcommand{\M}{\rm Morph}
\def\mod{\mathop{\rm mod}\nolimits}
\begin{document}

\title[Article Title]{Constructions of Symmetrical Separable Equivalences and Their Applications}


\author[1]{\fnm{Juxiang} \sur{Sun}}\email{Sunjx8078@163.com}

\author*[2]{\fnm{Guoqiang} \sur{Zhao}}\email{gqzhao@hdu.edu.cn}


\affil[1]{\orgdiv{School of Mathematics and Statistics}, \orgname{Shangqiu Normal University}, \orgaddress{\street{}\city{Shangqiu}, \postcode{476000}, \state{Henan}, \country{China}}}

\affil*[2]{\orgdiv{School of Science}, \orgname{Hangzhou Dianzi University}, \orgaddress{\street{} \city{Hangzhou}, \postcode{310018}, \state{Zhejiang}, \country{China}}}


\abstract{Let $\Lambda$ and $\Gamma$ be symmetrically separably equivalent Artin algebras.
We prove that there exist symmetrical separable equivalences between
 certain endomorphism algebras of modules. As applications,
 we provide several methods to construct symmetrical separable equivalences from given ones and
 discuss when the rigidity dimension is an invariant under symmetrical separable equivalences.
 Moreover, we show that a symmetrical separable equivalence preserves the Frobenius-finite type,
 Auslander-type condition, the (strong) Nakayama conjecture, the Auslander-Gorenstein conjecture and so on.}

\keywords{Symmetrical separable equivalence, Endomorphism algebra, Auslander-type condition, Homological conjecture}


\pacs[MSC Classification]{16G10,16D20, 16E10, 16E30}

\maketitle

\section{Introduction}\label{sec1}

As a weakening of Morita equivalence,
the notion of the (symmetrical) separable equivalence was introduced
in \cite{K1, L1},
with many interesting applications (see \cite{B,BE,BT,K1,K2,L1,L2} for detail). The fundamental examples
are separable split Frobenius ring extensions and the stable equivalences of adjoint type \cite{Xi}.
It was proved in \cite{K2} that two rings $\Lambda$ and $\Gamma$ are symmetrically separably equivalent
if and only if they are linked by a biseparable Frobenius bimodule.
It has been known that two symmetrically separably equivalent Artin algebras
share many important properties of  homology and representation,
such as global dimension,
representation dimension, finitely generated cohomology, dominant dimension
and so on (see \cite{B,K1,L2} for detail).

The  first part of the present paper is devoted to providing several methods
to produce a new symmetrical separable equivalence from a given one.
Our main construction is the following result.

\vspace{0.2cm}

{\bf Theorem A} (Theorem 3.1)
 {\it Let $\Lambda$ and $\Gamma$ be symmetrically separably equivalent defined by $_{\Lambda}M_{\Gamma}$ and $_{\Gamma}N_{\Lambda}$.
 Let $U$ be a finitely generated $\Lambda$-module and $V$ a finitely generated $\Gamma$-module such that
 $N\otimes_{\Lambda}U \in \add_{\Gamma}V$ and $M\otimes_{\Gamma}V \in \add_{\Lambda}U$.
 Then there exists a symmetrical separable equivalence between  endomorphism algebras $\End_{\Lambda}U$  and $\End_{\Gamma}V$.}

 \vspace{0.2cm}

 As applications of Theorem A, we have the following constructions which show that there do exist algebras and modules satisfying all conditions in Theorem A.

 \vspace{0.2cm}
{\bf Theorem B} (Theorem 3.2 and Theorem 3.4) {\it Let $\Lambda$ and $\Gamma$ be symmetrically separably equivalent Artin algebras.

$\mathrm{(1)}$ Assume that $\Lambda$ and $\Gamma$ are representation-finite, then their Auslander algebras are also symmetrically separably equivalent.

$\mathrm{(2)}$ The Frobenius part of $\Lambda$ and that of $\Gamma$ are symmetrically separably equivalent.

}

\vspace{0.2cm}

As another consequence of Theorem A, we construct a new symmetrical separable equivalence from a given stable equivalence of adjoint type and a tilting module.

\vspace{0.2cm}

{\bf Theorem C} (Theorem 3.7)
{\it Let $\Lambda$ and $\Gamma$ be stably equivalent of adjoint type  defined by $_{\Lambda}M_{\Gamma}$ and $_{\Gamma}N_{\Lambda}$ and $m$ a positive integer.
If $T$ is a $m$-tilting $\Lambda$-module, then there exists a symmetrical separable equivalence between endomorphism algebras $\End_{\Lambda}T$ and $\End_{\Gamma}(N\otimes_{\Lambda}T)$.}

\vspace{0.2cm}

As applications of the theorems above, the invariant of the rigidity dimension,
the Frobenius-finite type and the tilting property of algebras under symmetrical separable equivalences are investigated.

The second part of this paper is to investigate invariant properties of the Auslander-type condition and we obtain the following result.

\vspace{0.2cm}

{\bf Theorem D} (Theorem 4.5) {\it Let $\Lambda$ and $\Gamma$ be left and right noetherian rings.
Suppose that there is a symmetrical separable equivalence between $\Lambda$ and $\Gamma$.
Then $\Lambda$ satisfies the Auslander-type condition  if and only if so does $\Gamma$.}

All proofs of the results above will be given in Section 3 and Section 4 after we recall some necessary notations and results in Section 2.

\section{Preliminaries}\label{sec2}

Throughout this paper,  all rings are left and right noetherian rings, and all modules are left modules unless stated otherwise.
For a ring $\Lambda$, we denote by $\Mod\Lambda$ (resp. $\mod \Lambda$) the category consisting of all (resp. finitely generated) left $\Lambda$-modules.
We write $\pd_{\Lambda}M$ (resp. $\id_{\Lambda}M$, $\fd_{\Lambda}M$) to denote the projective dimension (resp. the injective, flat dimension) of $M$,
and the global dimension of $\Lambda$ is denoted by gl.dim$\Lambda$.
Let $M, N$ be finitely generated $\Lambda$-modules, we denote by $\add M$ the subcategory of $\mod\Lambda$
consisting of all direct summands of finite copies of $M$,  and we set $N|M$, if $N$ is isomorphic to a direct summand of $M$.

\begin{definition}[\cite{K1, L1}]
Let $\Lambda$ and $\Gamma$ be two rings, and let bimodules $_{\Lambda}M_{\Gamma}$ and $_{\Gamma}N_{\Lambda}$
be finitely generated projective as one-sided modules.
$\Lambda$ and $\Gamma$ are said to be  separably equivalent induced by bimodules $_{\Lambda}M_{\Gamma}$ and  $_{\Gamma}N_{\Lambda}$,
if there exist bimodules  $_\Lambda X_\Lambda$ and $_\Gamma Y_\Gamma$ and bimodule isomorphisms
 $$_{\Lambda}M\otimes_{\Gamma}N_{\Lambda}\cong _{\Lambda}\Lambda_{\Lambda}\oplus _{\Lambda}X_\Lambda \qquad and
 \qquad_{\Gamma}N\otimes_{\Lambda}M_{\Gamma}\cong _{\Gamma}\Gamma_{\Gamma}\oplus _{\Gamma}Y_{\Gamma}.$$
 Furthermore, assume that $(M\otimes_{\Gamma}-, N\otimes_{\Lambda}-)$ and $(N\otimes_{\Lambda}-, M\otimes_{\Gamma}-)$ are adjoint pairs,
 then $\Lambda$ and $\Gamma$ are said to be symmetrically separably equivalent.
\end{definition}

\begin{remark}
Let $\Lambda$ and $\Gamma$ be symmetrically separably equivalent defined as Definition 2.1.

$\mathrm{(1)}$ If the bimodules $_\Lambda X_\Lambda$ and $_\Gamma Y_\Gamma$ are the zero modules,
then $\Lambda$ is Morita equivalent to $\Gamma$.

$\mathrm{(2)}$ If  $_\Lambda X_\Lambda$ and $_\Gamma Y_\Gamma$ are projective as bimodules,
then $\Lambda$ and $\Gamma$ are stably equivalent of adjoint type (see \rm{\cite{Xi}}) .
\end{remark}

\begin{example}
{\rm $\mathrm{(1)}$ (see \cite{K1}) For a ring $\Lambda$, $M_n(\Lambda)$ ( the matrix ring of $\Lambda$ degree in $n$ )
and $\Lambda$ are symmetrically separably equivalent;

$ \mathrm{(2)}$ (see \cite[Proposition 2]{P1}) Let $k$ be a field of characteristic $p>0$, and let $G$ be a finite group.
If $H$ is a Sylow $p$-subgroup of $G$,
then $kG$ and $kH$ are symmetrically  separably equivalent;

$\mathrm{(3)}$ (see \cite[Lemma 1]{BT}) Let $\Lambda$ be a ring and $G$ be a finite group.
If the order of $G$ is invertible in $\Lambda$, then the skew group ring $\Lambda G$ and  $\Lambda$ are symmetrically separably equivalent;

$\mathrm{(4)}$ (see \cite[Lemma 4.6]{SZ}) Let $\Lambda$ be a finite dimensional algebra over an algebraically closed field $k$,
and  $F$ be  a finite separable field extension of $k$, then $\Lambda$ and $\Lambda\otimes_{k}F$ are  symmetrically separably equivalent.

More examples of symmetrically separable equivalences can be found in \cite{P1}.}
\end{example}

For our purpose we also need the following fact.

\begin{lemma}[see \cite{K1,SZ}]
 Let $\Lambda$ and $\Gamma$ be symmetrically separably equivalent defined as Definition 2.1. Then

$\mathrm{(1)}$   $_{\Lambda}M_{\Gamma}$  and $_{\Gamma}N_{\Lambda}$ are projective generators as one-sided modules;

$\mathrm{(2)}$   $N\otimes_{\Lambda}-$ and $M\otimes_{\Gamma}-$ are exact functors and take projective modules to projective modules;

$\mathrm{(3)}$  There are natural isomorphisms
 $(M\otimes_{\Gamma}-)\circ(N\otimes_{\Lambda}-)\to \mathrm{Id}_{\Mod \Lambda}\oplus (X\otimes_{\Lambda}-)$ and
 $(N\otimes_{\Lambda}-)\circ(M\otimes_{\Gamma}-)\to \mathrm{Id}_{\Mod\Gamma}\oplus (Y\otimes_{\Gamma}-).$

$\mathrm{(4)}$ $\gd\Lambda=\gd\Gamma$.
\end{lemma}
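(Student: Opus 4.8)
The plan is to read off all four statements directly from the two bimodule isomorphisms $_{\Lambda}M\otimes_{\Gamma}N_{\Lambda}\cong{}_{\Lambda}\Lambda_{\Lambda}\oplus{}_{\Lambda}X_{\Lambda}$ and $_{\Gamma}N\otimes_{\Lambda}M_{\Gamma}\cong{}_{\Gamma}\Gamma_{\Gamma}\oplus{}_{\Gamma}Y_{\Gamma}$ of Definition 2.1 together with the standing hypothesis that $M$ and $N$ are finitely generated projective on each side. For (1), since each of $_{\Lambda}M,\ M_{\Gamma},\ {}_{\Gamma}N,\ N_{\Lambda}$ is already finitely generated projective, it suffices to show each is a generator. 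Consider $_{\Lambda}M$: as $_{\Gamma}N$ is finitely generated projective, $N$ is a direct summand of $\Gamma^{m}$ for some $m$, so $M\otimes_{\Gamma}N$ is a direct summand of $M\otimes_{\Gamma}\Gamma^{m}\cong M^{m}$ as left $\Lambda$-modules; the first bimodule isomorphism then gives that $_{\Lambda}\Lambda$ is a direct summand of $_{\Lambda}(M^{m})$, so $_{\Lambda}M$ is a projective generator. The other three cases follow identically after choosing the right one-sided structure: use $_{\Lambda}M$ projective together with the second isomorphism to handle $_{\Gamma}N$, use $M_{\Gamma}$ projective with the first isomorphism to handle $N_{\Lambda}$, and use $N_{\Lambda}$ projective with the second isomorphism to handle $M_{\Gamma}$.

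For (2), $N_{\Lambda}$ is projective, hence flat, so $N\otimes_{\Lambda}-$ is exact, and likewise $M\otimes_{\Gamma}-$ is exact since $M_{\Gamma}$ is flat; moreover $N\otimes_{\Lambda}\Lambda\cong{}_{\Gamma}N$ is projective, so the additive functor $N\otimes_{\Lambda}-$ carries every free $\Lambda$-module, and therefore every projective $\Lambda$-module, to a projective $\Gamma$-module, and symmetrically for $M\otimes_{\Gamma}-$. For (3), applying $-\otimes_{\Lambda}L$ to the first bimodule isomorphism and invoking the associativity isomorphism $(M\otimes_{\Gamma}N)\otimes_{\Lambda}L\cong M\otimes_{\Gamma}(N\otimes_{\Lambda}L)$, natural in $L$, yields the natural isomorphism $(M\otimes_{\Gamma}-)\circ(N\otimes_{\Lambda}-)\cong\mathrm{Id}_{\Mod\Lambda}\oplus(X\otimes_{\Lambda}-)$; the other natural isomorphism is obtained symmetrically from the second bimodule isomorphism.

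For (4), (2) shows $N\otimes_{\Lambda}-$ sends a projective resolution of a $\Lambda$-module $L$ to a projective resolution of $N\otimes_{\Lambda}L$, so $\pd_{\Gamma}(N\otimes_{\Lambda}L)\le\pd_{\Lambda}L$, and symmetrically $\pd_{\Lambda}(M\otimes_{\Gamma}K)\le\pd_{\Gamma}K$ for each $\Gamma$-module $K$. By (3), $L$ is a direct summand of $M\otimes_{\Gamma}(N\otimes_{\Lambda}L)$, hence $\pd_{\Lambda}L\le\pd_{\Lambda}(M\otimes_{\Gamma}(N\otimes_{\Lambda}L))\le\pd_{\Gamma}(N\otimes_{\Lambda}L)\le\pd_{\Lambda}L$, which forces $\pd_{\Lambda}L=\pd_{\Gamma}(N\otimes_{\Lambda}L)\le\gd\Gamma$; taking the supremum over all $L$ gives $\gd\Lambda\le\gd\Gamma$, and the reverse inequality follows by interchanging the roles of $\Lambda$ and $\Gamma$. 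I do not expect a genuine obstacle, since everything is forced by the definitions; the only points needing a bit of care are bookkeeping which one-sided module structure is used at each step of (1), and reading the chain of inequalities in (4) with the usual conventions when $\gd\Lambda=\infty$.
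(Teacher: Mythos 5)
Your proof is correct, and all four parts go through exactly as you argue: (1) by embedding $\Lambda$ (resp.\ $\Gamma$) as a direct summand of a finite power of $M$ or $N$ via the bimodule decompositions, (2) from one-sided projectivity and $N\otimes_{\Lambda}\Lambda\cong N$, (3) from associativity of tensor applied to the bimodule isomorphisms, and (4) by the sandwich of projective-dimension inequalities coming from (2) and (3). The paper states this lemma with a citation to the references rather than proving it, so there is no in-paper proof to compare against; your argument is the standard one and is the one those references carry out.
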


Let $\Lambda$ be an Artin algebra, and let $\mathcal{C}$ be a subcategory of $\mod\Lambda$.
The morphism category \cite{A}, denoted by Morph$(\mathcal{C})$, is defined as follows.
The objects in  Morph$(\mathcal{C})$ are all the morphisms $ f: C_1\to C_0$ in $\mathcal{C}$, denoted by $(C_1, C_0, f)$.
The morphism from  an object $ (C_1, C_0, f)$ to another object $ (C'_1, C'_0, f')$ is a pair $(\alpha_1,\alpha_0)$,
where $\alpha_i: C_i\to C'_i$ for $i=0,1$, such that $\alpha_0f=f'\alpha_1$.
For two objects $f: C_1\to C_0$ and $ f': C'_1\to C'_0$,
let $\mathcal{R_C}(f, f')=\{(\alpha_1,\alpha_0):  (C_1, C_0, f)\to  (C'_1, C'_0, f')$ such that there is some $\beta: C_0\to C'_1$ such that $\alpha_0=f'\beta \}$.
It is not hard to check that $\mathcal{R_C}$ gives an additive equivalence relation on $\M(\mathcal{C})$.
The following lemma is due to \cite[Lemma 2.4]{LX}.

\begin{lemma}
Let $\Lambda$ and $\Gamma$ be Artin algebras and $U\in\mod \Lambda$.
Then there is an equivalence $\mathrm{H}_A: \M(\add_\Lambda U)/\mathcal{R}_U\to \mod A$,
where $A=\End_{\Lambda}U$ and $\mathcal{R}_U=\mathcal{R}_{\add_{\Lambda}U}$,
given by $\mathrm{H}_A((U_1,U_0,f))=\Coker (\Hom_{\Lambda}(U,f))$ for any object $(U_1,U_0,f)$ in $\M(\add_\Lambda U)$,
and morphism $\mathrm{H}_A(\alpha_1,\alpha_0): \mathrm{H}_A((U_1,U_0,f))\to\mathrm{ H}_A ((U'_1,U'_0,f'))$
to be the unique morphism which makes the diagram
$$\xymatrix{
  \Hom_{\Lambda}(U,U_1) \ar[d]_{(U,\alpha_1)} \ar[r]^{(U,f)} &  \Hom_{\Lambda}(U, U_0) \ar[d]_{(U,\alpha_0)} \ar[r] & \mathrm{H}_A((U_1,U_0,f))\ar[d] \ar[r] &0 \\
  \Hom_{\Lambda}(U,U'_1) \ar[r]^{(U, f')} &  \Hom_{\Lambda}(U, U'_0) \ar[r] & \mathrm{H}_A((U'_1,U'_0,f')) \ar[r] & 0  }$$
commute, for any morphism $(\alpha_1, \alpha_0)$ from an object $(U_1,U_0,f)$ to anther object $(U'_1,U'_0,f')$.
\end{lemma}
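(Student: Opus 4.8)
The plan is to exhibit $\mathrm{H}_A$ as the composite of the cokernel construction with the classical ``projectivization'' equivalence, and then to invoke the elementary principle that a full dense additive functor induces an equivalence from the quotient of its domain by its kernel ideal. Concretely: since $\End_{\Lambda}(U)=A$, the functor $e:=\Hom_{\Lambda}(U,-)$ restricts to an additive equivalence $\add_{\Lambda}U\xrightarrow{\ \sim\ }\add A_A=\mathrm{proj}\,A$ carrying $U$ to $A_A$; in particular $e$ is fully faithful on $\add_{\Lambda}U$ and sends each of its objects to a finitely generated projective $A$-module. Hence for an object $(U_1,U_0,f)$ of $\M(\add_{\Lambda}U)$ the map $e(f)\colon e(U_1)\to e(U_0)$ is a projective presentation of $\mathrm{H}_A((U_1,U_0,f))=\Coker e(f)\in\mod A$, and a morphism $(\alpha_1,\alpha_0)$ induces on cokernels the unique map making the displayed diagram commute; thus $\mathrm{H}_A$ is a well-defined additive functor $\M(\add_{\Lambda}U)\to\mod A$.

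Next I would check that $\mathrm{H}_A$ is dense and full. Density is immediate: any $M\in\mod A$ has a projective presentation $P_1\xrightarrow{d}P_0\to M\to 0$, and transporting $d$ through $\mathrm{proj}\,A\simeq\add_{\Lambda}U$ gives $f\colon U_1\to U_0$ with $\mathrm{H}_A((U_1,U_0,f))\cong M$. For fullness, given $\bar\alpha\colon\Coker e(f)\to\Coker e(f')$, use projectivity of $e(U_0)$ to lift $\bar\alpha$ to an $A$-map $e(U_0)\to e(U_0')$ over the two presentations, and then projectivity of $e(U_1)$ to obtain a compatible lift $e(U_1)\to e(U_1')$; by full faithfulness of $e$ on $\add_{\Lambda}U$ this pair has the form $(e(\alpha_1),e(\alpha_0))$ for $\Lambda$-morphisms $\alpha_i\colon U_i\to U_i'$, the identity $e(f')e(\alpha_1)=e(\alpha_0)e(f)$ forces $f'\alpha_1=\alpha_0 f$, and then $\mathrm{H}_A(\alpha_1,\alpha_0)=\bar\alpha$.

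The crux, which I expect to be the main obstacle, is to identify the kernel ideal of $\mathrm{H}_A$ with $\mathcal{R}_U$; once this is done the quotient principle yields the desired equivalence $\M(\add_{\Lambda}U)/\mathcal{R}_U\to\mod A$. One inclusion is easy: if $(\alpha_1,\alpha_0)\in\mathcal{R}_U(f,f')$, say $\alpha_0=f'\beta$, then $e(\alpha_0)=e(f')e(\beta)$ factors through $e(f')$, so the induced map $\Coker e(f)\to\Coker e(f')$ vanishes. Conversely, suppose $\mathrm{H}_A(\alpha_1,\alpha_0)=0$; chasing the commutative square of presentations shows $\im e(\alpha_0)\subseteq\im e(f')$, so by projectivity of $e(U_0)$ there is $g\colon e(U_0)\to e(U_1')$ with $e(f')g=e(\alpha_0)$, and full faithfulness of $e$ gives $g=e(\beta)$ for some $\beta\colon U_0\to U_1'$, whence $f'\beta=\alpha_0$ and $(\alpha_1,\alpha_0)\in\mathcal{R}_U(f,f')$. (Alternatively, one may simply cite \cite[Lemma 2.4]{LX}, as the authors do.)
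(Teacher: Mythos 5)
Your proof is correct. Note that the paper does not actually prove this lemma---it is quoted from \cite[Lemma 2.4]{LX}, as you yourself remark in your closing parenthetical---so there is no in-paper argument to compare against; your projectivization proof (restrict $\Hom_{\Lambda}(U,-)$ to an equivalence $\add_{\Lambda}U\simeq\mathrm{proj}\,A$, show $\mathrm{H}_A$ is well-defined, full and dense, identify its kernel ideal with $\mathcal{R}_U$ using projectivity and full faithfulness, then pass to the quotient) is the standard one and supplies exactly what the citation suppresses. The one point you leave tacit, as does the paper, is the verification that $\mathcal{R}_U$ is a two-sided ideal of $\M(\add_{\Lambda}U)$; this is needed both for the additive quotient category to be defined and for your ``full dense functor modulo kernel ideal'' principle to apply, and although it is an easy computation (if $\alpha_0=f'\beta$ then $(\gamma_0\alpha_0)= (f''\gamma')\cdot(\text{something})$ on one side and $\alpha_0\delta_0=f'(\beta\delta_0)$ on the other), it should be recorded rather than assumed.
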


Let $K$ be a finitely generated $\Lambda$-module.
Recall that an exact sequence
$$0\to K \to C^0 \to C^1 \to \cdots \to C^n\to 0 $$
in $\mod \Lambda$ with all $C^i \in \mathcal{C}$ is called an  {\it $n$-$\mathcal{C}$-coresolution} of $K$.
An  {\it $n$-$\mathcal{C}$-coresolution} of $K$ is said to be {\it proper},
if it is also $\Hom_{\Lambda}(-, \mathcal{C})$-exact.
Dual to Lemma 4.2 in \cite{HS}, we have the following lemma.

\begin{lemma}
Let $\Lambda$ be an Artin algebra, let $X,X_1$ and $T$ be $\Lambda$-modules such that  $X_1 \in \add X$.
if $X$ has a proper $n$-$\add T $-coresolution:
$0\rightarrow X \stackrel{ f_{0}}{\longrightarrow} T_{0}\rightarrow\cdots \rightarrow T_{n-1} \stackrel{ f_{n}}{\longrightarrow}T_{n} \rightarrow 0$, then $X_{1}$ also has a proper $n$-$\add T $-coresolution:
$$  0\rightarrow X_1  \stackrel{ f'_{0}}{\longrightarrow} T'_{0}\rightarrow\cdots \rightarrow T'_{n-1} \stackrel{ f'_{n-1}}{\longrightarrow}T'_n \rightarrow 0$$
\end{lemma}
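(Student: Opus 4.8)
The plan is to prove, by induction on $n$, the following slightly more flexible statement of relative homological algebra: \emph{any direct summand of a $\Lambda$-module that admits a proper $n$-$\add T$-coresolution itself admits a proper $n$-$\add T$-coresolution.} Granting this, the lemma is immediate: since $X_{1}\in\add X$, the module $X_{1}$ is a direct summand of $X^{m}$ for some $m$, and forming the direct sum of $m$ copies of the given proper $n$-$\add T$-coresolution of $X$ yields a proper $n$-$\add T$-coresolution of $X^{m}$ (a direct sum of exact sequences is exact, and $\Hom_{\Lambda}(-,T')$ is additive in its first argument, so it sends a direct sum of complexes to a direct sum of complexes).

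For $n=0$ there is nothing to prove, since a proper $0$-$\add T$-coresolution is just an isomorphism onto an object of $\add T$, and $\add T$ is closed under direct summands. For the inductive step, let $n\ge 1$ and suppose $Z=M\oplus M'$ has a proper $n$-$\add T$-coresolution $0\to Z\xrightarrow{g_{0}}T^{0}\to T^{1}\to\cdots\to T^{n}\to 0$, with cosyzygy $W=\Coker g_{0}$. First I would check, by a diagram chase applying $\Hom_{\Lambda}(-,T')$ for $T'\in\add T$, that breaking this complex at the first position yields a proper short exact sequence $0\to Z\xrightarrow{g_{0}}T^{0}\to W\to 0$ together with a proper $(n-1)$-$\add T$-coresolution of $W$. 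Writing $g_{0}=(\alpha,\beta)$ according to $Z=M\oplus M'$, the maps $\alpha\colon M\to T^{0}$ and $\beta\colon M'\to T^{0}$ are monomorphisms, and I claim that $0\to M\xrightarrow{\alpha}T^{0}\to W'\to 0$ and $0\to M'\xrightarrow{\beta}T^{0}\to V'\to 0$ (with $W'=\Coker\alpha$, $V'=\Coker\beta$) are again proper: any $\phi\colon M\to T'$ with $T'\in\add T$ extends to $(\phi,0)\colon Z\to T'$, which lifts through $g_{0}$ by properness of the first sequence, and this lift restricts to an extension of $\phi$ along $\alpha$.

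Next I would invoke a relative Schanuel argument. The direct sum of the two proper sequences just produced is a proper short exact sequence $0\to Z\to T^{0}\oplus T^{0}\to W'\oplus V'\to 0$ whose middle term lies in $\add T$; comparing it with the proper short exact sequence $0\to Z\xrightarrow{g_{0}}T^{0}\to W\to 0$ by forming the pushout of the two monomorphisms out of $Z$, and using that objects of $\add T$ are injective relative to proper short exact sequences (which is exactly what ``proper'' encodes), one obtains $W'\oplus V'\oplus T^{0}\cong W\oplus T^{0}\oplus T^{0}$. In particular $W'$ is a direct summand of $W\oplus T^{0}\oplus T^{0}$, and the latter has a proper $(n-1)$-$\add T$-coresolution, being a direct sum of $W$ (which has one) and copies of the $\add T$-module $T^{0}$ (which has the trivial one, padded by zeros). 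By the induction hypothesis $W'$ has a proper $(n-1)$-$\add T$-coresolution, and splicing it onto $0\to M\xrightarrow{\alpha}T^{0}\to W'\to 0$ — once more checking properness of the spliced complex via $\Hom_{\Lambda}(-,T')$ — produces the desired proper $n$-$\add T$-coresolution of $M$, completing the induction.

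I expect the only real work to be bookkeeping: verifying that properness ($\Hom_{\Lambda}(-,\add T)$-exactness) is preserved under each operation used — restriction of a coresolution to a summand of its initial term, finite direct sums, splicing, and the Schanuel pushout — and, within the Schanuel step, checking that the two short exact sequences produced by the pushout are themselves proper, so that the relevant $\add T$-summands genuinely split off. The rest of the argument is formal, and the key structural device is to prove the summand-closed version of the statement rather than to attempt to split the given coresolution directly.
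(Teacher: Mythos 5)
Your argument is correct. Note, however, that the paper does not actually prove this lemma; it is stated as ``dual to Lemma~4.2 in [HS]'' and cited without further argument, so there is no in-paper proof to compare against literally. Your proposal therefore supplies a self-contained proof where the paper gives only a pointer.

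The route you take is the standard relative-homological one and it is sound. Reducing the $\add X$ hypothesis to the direct-summand case by taking the $m$-fold direct sum of the given coresolution of $X$ is clean, and the relative Schanuel step is exactly right: writing $h=\alpha\oplus\beta\colon Z\to T^{0}\oplus T^{0}$, the pushout $P$ of $g_{0}$ and $h$ yields short exact sequences $0\to T^{0}\to P\to W'\oplus V'\to 0$ and $0\to T^{0}\oplus T^{0}\to P\to W\to 0$, and each splits because, by the pushout's universal property, a retraction of the first is furnished by a map $v\colon T^{0}\oplus T^{0}\to T^{0}$ with $vh=g_{0}$ (which exists since $T^{0}\in\add T$ and $h$ is a proper monomorphism), while a retraction of the second is furnished by a map $u\colon T^{0}\to T^{0}\oplus T^{0}$ with $ug_{0}=h$ (which exists since $T^{0}\oplus T^{0}\in\add T$ and $g_{0}$ is a proper monomorphism). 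This gives $W'\oplus V'\oplus T^{0}\cong W\oplus T^{0}\oplus T^{0}$ as you claim. The bookkeeping points you defer --- that the first cosyzygy sequence and the remaining $(n-1)$-coresolution of $W$ are each $\Hom_{\Lambda}(-,\add T)$-exact, and that splicing the proper short exact sequence $0\to M\xrightarrow{\alpha}T^{0}\to W'\to 0$ onto a proper $(n-1)$-$\add T$-coresolution of $W'$ preserves properness --- all go through by the usual diagram chases. In short, the proof is correct and is very likely the same argument (in dual form) as the one in the reference the paper cites.
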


Let $\Lambda$ be a left and right noetherian ring and let
$$0\to \Lambda \to I^0(\Lambda)\to I^1(\Lambda)\to I^2(\Lambda)\to \cdots \to I^n(\Lambda)\to \cdots $$
be the minimal injective coresolution of $_{\Lambda}\Lambda$.
Following \cite{T},  {\it the dominant dimension} of $\Lambda$, denoted by dom.dim$\Lambda$, is defined by the minimal number $n$ such that
$I^n(\Lambda)$ is not projective. If $I^i(\Lambda)$ are injective-projective for all $i\geq 0$, then we have  dom.dim$\Lambda=\infty.$
For a positive integer $n$, recall from \cite{FGR} that $\Lambda$ is said to be an {\it $n$-Gorenstein ring},
 if $\fd_{\Lambda}I^i(\Lambda)\leq i$ for any $0\leq i\leq n-1,$ and $\Lambda$ is said to satisfy the {\it Auslander condition},
 if $\Lambda$ is $n$-Gorenstein ring for all $n$.  As a nontrivial generalization of the notion of the Auslander condition,
 Huang and Iyama in \cite{HI} introduced the notion of the Auslander-type condition as follows.
 For any $n,k\geq 0$, $\Lambda$ is said to be $G_n(k)$, if $\fd_{\Lambda}I^i(\Lambda)\leq k+i$,
  for all $0\leq i\leq n-1.$ It is easy to see that $\Lambda$ is an $n$-Gorenstein ring if and only if $\Lambda$ is $G_n(0)$.
The Auslander-type conditions play a significant role in representation theory of algebras,
homological algebras and non-commutative algebraic geometry (see \cite{Hu2, HI, IS,I1,I2} for detail).


Finally, let us list some homological conjectures, which are still open.

\vspace{0.1cm}

{\bf Auslander Gorenstein Conjecture:} An Artin algebra satisfying the Auslander condition  is a Gorenstein algebra (that is, the left
and right self-injective dimensions of $\Lambda$ are finite).

\vspace{0.1cm}

{\bf Nakayama Conjecture:} An Artin algebra $\Lambda$ with  dom.dim$\Lambda=\infty$ is self-injective.

\vspace{0.1cm}

{\bf Gorenstein Symmetric Conjecture:} Let $\Lambda$ be an Artin algebra, then $\id_\Lambda\Lambda=\id \Lambda_{\Lambda}$.

\vspace{0.1cm}

{\bf Strong  Nakayama Conjecture:} Any finitely generated module $M$ over an Artin algebra $\Lambda$ with  $\Ext_{\Lambda}^{\geq 0}(M, \Lambda)=0$ is zero.

\section{Construction of symmetrical separable equivalences}\label{sec3}



In this section, all rings are Artin $R$-algebras, where $R$ is a commutative Artin ring,
 and all modules are finitely generated left modules.
 We assume that $\Lambda$ and $\Gamma$ are symmetrically separably equivalent induced
 by bimodules $_\Lambda M_\Gamma$ and $_\Gamma N_\Lambda$.
 That is, there exist bimodules $_{\Lambda}X_{\Lambda}$ and $_{\Gamma}Y_{\Gamma}$,  and bimodule isomorphisms:
 $_{\Lambda}M\otimes_{\Gamma}N_{\Lambda}\cong  {_\Lambda}\Lambda_{\Lambda}\oplus {_{\Lambda}X_{\Lambda}}$,
and $_{\Gamma}N\otimes_{\Lambda}M_{\Gamma}\cong {_\Gamma}\Gamma_{\Gamma}\oplus {_\Gamma}Y_{\Gamma}$.
We define functors  $\mathrm{T}_N=N\otimes_{\Lambda}-: \Mod \Lambda\to \Mod \Gamma$ and
$\mathrm{T}_M=M\otimes_{\Gamma}-:  \Mod \Gamma\to \Mod \Lambda$,
which are exact functors such that $(\mathrm{T}_M, \mathrm{T}_N)$ and
$(\mathrm{T}_N, \mathrm{T}_M)$ are adjoint pairs by Lemma 2.4(2) and assumption.

\begin{theorem}
 Let $\Lambda$ and $\Gamma$ be symmetrically separably equivalent defined by $_{\Lambda}M_{\Gamma}$ and $_{\Gamma}N_{\Lambda}$.
 Let $U$ be a $\Lambda$-module and $V$ a $\Gamma$-module such that $N\otimes_{\Lambda}U \in \add_{\Gamma}V$
 and $M\otimes_{\Gamma}V \in \add_{\Lambda}U$. Then there exists a symmetrical separable equivalence between  endomorphism algebras $\End_{\Lambda}U$  and $\End_{\Gamma}V$.
\end{theorem}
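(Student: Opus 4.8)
The plan is to exhibit explicit bimodules between $A:=\End_{\Lambda}U$ and $B:=\End_{\Gamma}V$ and to check the three requirements in Definition 2.1. Write $\mathrm{T}_N U\in\add_{\Gamma}V$ and $\mathrm{T}_M V\in\add_{\Lambda}U$ for the two hypotheses. I would take
$$ {}_{A}Q_{B}:=\Hom_{\Gamma}(V,\mathrm{T}_N U),\qquad {}_{B}P_{A}:=\Hom_{\Lambda}(U,\mathrm{T}_M V), $$
where $A=\End_{\Lambda}U$ acts through $U$, $B=\End_{\Gamma}V$ acts through $V$, and the remaining actions ($A$ on $\mathrm{T}_N U$, $B$ on $\mathrm{T}_M V$) are induced by functoriality of $\mathrm{T}_N$ and $\mathrm{T}_M$. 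Because $(\mathrm{T}_M,\mathrm{T}_N)$ and $(\mathrm{T}_N,\mathrm{T}_M)$ are adjoint pairs -- this is exactly where the \emph{symmetry} of the separable equivalence enters -- naturality of the adjunction isomorphisms gives bimodule isomorphisms $Q\cong\Hom_{\Lambda}(\mathrm{T}_M V,U)$ and $P\cong\Hom_{\Gamma}(\mathrm{T}_N U,V)$. Reading off each of the two descriptions together with $\mathrm{T}_M V\in\add_{\Lambda}U$ and $\mathrm{T}_N U\in\add_{\Gamma}V$, one sees that $Q$ is a direct summand of a finitely generated free module both as a left $A$-module and as a right $B$-module, and likewise for $P$; hence $P$ and $Q$ are finitely generated projective as one-sided modules, as required.

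Next I would establish the two ``identity-plus-error'' isomorphisms. Substituting $P\cong\Hom_{\Gamma}(\mathrm{T}_N U,V)$ and using the composition map
$$ \Hom_{\Gamma}(V,\mathrm{T}_N U)\otimes_{B}\Hom_{\Gamma}(\mathrm{T}_N U,V)\longrightarrow\End_{\Gamma}(\mathrm{T}_N U),\qquad f\otimes g\mapsto f\circ g, $$
which is an isomorphism because $\mathrm{T}_N U\in\add_{\Gamma}V$ (check it for $V$ and for $V^{n}$, then use bi-additivity in the module argument), gives $Q\otimes_{B}P\cong\End_{\Gamma}(\mathrm{T}_N U)$ as $(A,A)$-bimodules. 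Applying the $(\mathrm{T}_M,\mathrm{T}_N)$-adjunction once more rewrites $\End_{\Gamma}(\mathrm{T}_N U)=\Hom_{\Gamma}(\mathrm{T}_N U,\mathrm{T}_N U)\cong\Hom_{\Lambda}(\mathrm{T}_M\mathrm{T}_N U,U)$, and Lemma 2.4(3) supplies a natural -- hence $A$-equivariant -- isomorphism $\mathrm{T}_M\mathrm{T}_N U\cong U\oplus(X\otimes_{\Lambda}U)$; applying $\Hom_{\Lambda}(-,U)$ then splits off the identity summand, so
$$ Q\otimes_{B}P\;\cong\;\End_{\Lambda}U\;\oplus\;\Hom_{\Lambda}(X\otimes_{\Lambda}U,\,U) $$
as $(A,A)$-bimodules. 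The mirror-image computation (interchange $\Lambda\leftrightarrow\Gamma$, $M\leftrightarrow N$, $X\leftrightarrow Y$, $U\leftrightarrow V$), using $\mathrm{T}_N V$... more precisely substituting $Q\cong\Hom_{\Lambda}(\mathrm{T}_M V,U)$, composing into $\End_{\Lambda}(\mathrm{T}_M V)$, adjoining into $\Hom_{\Gamma}(\mathrm{T}_N\mathrm{T}_M V,V)$ and invoking $\mathrm{T}_N\mathrm{T}_M V\cong V\oplus(Y\otimes_{\Gamma}V)$, yields $P\otimes_{A}Q\cong\End_{\Gamma}V\oplus\Hom_{\Gamma}(Y\otimes_{\Gamma}V,\,V)$ as $(B,B)$-bimodules.

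Finally, the two adjunctions. Since $_{A}Q$ is finitely generated projective, $Q\otimes_{B}-$ has right adjoint $\Hom_{A}(Q,-)$; the standard identification $\Hom_{A}(\Hom_{\Lambda}(W,U),A)\cong\Hom_{\Lambda}(U,W)$ for $W\in\add_{\Lambda}U$ (immediate for $W=U$, hence general by additivity) gives $\Hom_{A}(Q,A)\cong P$ as $(B,A)$-bimodules, so $\Hom_{A}(Q,-)\cong P\otimes_{A}-$ and $(Q\otimes_{B}-,\,P\otimes_{A}-)$ is an adjoint pair. Symmetrically, $_{B}P$ is finitely generated projective and $\Hom_{B}(P,B)\cong Q$ as $(A,B)$-bimodules, whence $(P\otimes_{A}-,\,Q\otimes_{B}-)$ is an adjoint pair as well. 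Combined with the displayed decompositions, this shows that $A=\End_{\Lambda}U$ and $B=\End_{\Gamma}V$ are symmetrically separably equivalent via $_{A}Q_{B}$ and $_{B}P_{A}$. I expect the real work to be bookkeeping rather than ideas: at every step one must confirm that the natural and adjunction isomorphisms used are isomorphisms of $(A,A)$-, $(B,B)$-, $(A,B)$- or $(B,A)$-bimodules and not merely of abelian groups or one-sided modules, which in each case follows from naturality in the variable on which $A$ (through $U$) or $B$ (through $V$) acts. (Alternatively, one could run the whole argument through the morphism-category equivalences $\mathrm{H}_A$, $\mathrm{H}_B$ of Lemma 2.6, transporting $\mathrm{T}_N$ and $\mathrm{T}_M$ to functors $\mod A\to\mod B$ and back, but the bimodule formulation above produces the data demanded by Definition 2.1 most directly.)
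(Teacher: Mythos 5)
Your proposal is correct, and it takes a genuinely different (and more direct) route than the paper. The paper follows the Liu--Xi morphism-category framework (Lemma 2.5): it lifts $\mathrm{T}_N$ and $\mathrm{T}_M$ to functors on $\M(\add U)/\mathcal{R}_U$ and $\M(\add V)/\mathcal{R}_V$, transports them through the equivalences $\mathrm{H}_A$, $\mathrm{H}_B$ to get additive functors $\mathbb{F}\colon\mod A\to\mod B$ and $\mathbb{G}\colon\mod B\to\mod A$, spends the bulk of the proof (Step~1, via the Horseshoe lemma, Yoneda's lemma and repeated applications of the Snake lemma) showing these functors are exact, checks they preserve projectives (Step~2), and only then invokes Watts' theorem to realize $\mathbb{F}\cong\mathbb{F}(A)\otimes_A-$, $\mathbb{G}\cong\mathbb{G}(B)\otimes_B-$ with $\mathbb{F}(A)=\Hom_\Gamma(V,N\otimes_\Lambda U)$ and $\mathbb{G}(B)=\Hom_\Lambda(U,M\otimes_\Gamma V)$ --- precisely your $Q$ and $P$. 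You instead write these bimodules down at the outset and verify Definition~2.1 directly: one-sided projectivity follows at once from the two adjunction descriptions of each bimodule together with the $\add$ hypotheses; the identity-plus-error decompositions come from the composition isomorphism for modules in $\add V$ (resp.\ $\add U$), one more application of adjunction, and Lemma~2.4(3); and the adjoint-pair conditions reduce to $\Hom_A(Q,A)\cong P$ and $\Hom_B(P,B)\cong Q$. This bypasses the paper's Step~1 entirely: the exactness of $\mathbb{F}$ and $\mathbb{G}$ is subsumed in the statement that $Q$ and $P$ are projective (hence flat) on the appropriate sides, which you establish much more cheaply. The paper's route has the virtue of fitting the result visibly into the Liu--Xi machinery for stable equivalences of Morita type, which the authors cite as their source of ideas; yours gives a shorter, more self-contained argument that also produces exactly the data Definition~2.1 asks for. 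One small caution: your $Q$ is naturally a $(B,A)$-bimodule and $P$ a $(A,B)$-bimodule in the paper's conventions (so $_{B}Q_{A}$ and $_{A}P_{B}$), not $_{A}Q_{B}$ and $_{B}P_{A}$ as you wrote; this is a handedness/opposite-algebra bookkeeping point of the kind you flagged yourself, and it does not affect the substance. Likewise the adjunction and composition isomorphisms do need to be checked to be bimodule maps and not merely isomorphisms of abelian groups; you note this and the checks (all by naturality in the appropriate variable) indeed go through.
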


\begin{proof}

We provide a proof by using some ideas in \cite[Theorem 1.1]{LX}.

Let $A=\End_{\Lambda}U$ and $B=\End_{\Gamma}V$.
Clearly, $A$ and $B$ are also Artin algebras. By Lemma 2.5, there exist
 two equivalences of categories
$$ \mathrm{H}_A: \M(\add_\Lambda U)/\mathcal{R}_U\to \mod A \quad and \quad
\mathrm{H}_B: \M(\add_\Gamma V)/\mathcal{R}_{V}\to \mod B.$$
Since  $N\otimes_{\Lambda}U \in \add_{\Gamma}V$,  we define a functor:
$$\widetilde{\mathrm{T}_N}:  \M(\add_\Lambda U)/\mathcal{R}_U\to \M(\add_\Gamma V)/\mathcal{R}_{V},$$
given by
$$\widetilde{\mathrm{T}_N}((U_1, U_0,f))=(\mathrm{T}_N(U_1),\mathrm{T}_N(U_0), \mathrm{T}_N(f)), $$
for any object $(U_1, U_0,f)$ in $\M(\add_\Lambda U)/\mathcal{R}_U$. And for a morphism $(\alpha_1,\alpha_0)+\mathcal{R}_U(f,f')$ from an object $(U_1, U_0, f)$ to another object $(U'_1, U'_0, f')$,
we define
$$\widetilde{\mathrm{T}_N}((\alpha_1,\alpha_0)+\mathcal{R}_U(f,f'))
=(\mathrm{T}_N(\alpha_1),\mathrm{T}_N(\alpha_0))+\mathcal{R}_{V}(\mathrm{T}_N(f),\mathrm{T}_N(f')).$$
Noting that $\mathrm{T}_N(\mathcal{R}_U(f,f'))\subseteq \mathcal{R}_{V}(\mathrm{T}_N(f),\mathrm{T}_N(f')),$
it is not hard to check that $\widetilde{\mathrm{T}_N}$ is well defined.
Similarly,  we have a functor $\widetilde{\mathrm{T}_M}:\M(\add_\Gamma V)/\mathcal{R}_{V}
\to \M(\add_\Lambda U)/\mathcal{R}_U$, because of $M\otimes_{\Gamma}V \in \add_{\Lambda}U$.

We denote by $\mathbb{F}$ the composition:
 $$\mod A \stackrel{\mathrm{H}^{-1}_A}\longrightarrow \M (\add_{\Lambda}U)/\mathcal{R}_U \stackrel{\widetilde{\mathrm{T}_N}}
 \longrightarrow\M (\add_{\Gamma} V)/\mathcal{R}_{V} \stackrel{\mathrm{H}_B}\longrightarrow \mod B, $$
and by $\mathbb{ G}$ the composition:
 $$\mod B \stackrel{\mathrm{H}^{-1}_B}\longrightarrow\M (\add_{\Gamma}V)/\mathcal{R}_{V}
 \stackrel{\widetilde{\mathrm{T}_M}} \longrightarrow \M (\add_{\Lambda}U)/\mathcal{R}_U \stackrel{H_A}\longrightarrow \mod A, $$
where $\mathrm{H}_A^{-1}$ ({\it resp.} $\mathrm{H}_B^{-1}$) is the inverse of $\mathrm{H}_A$ ({\it resp.} $\mathrm{H}_B$).
Clearly, $\mathbb{ F}$ and $\mathbb{ G}$ are additive functors.

 We divide the following proof in four steps.

{\bf Step 1}  We  show that $\mathbb{F}$ and $\mathbb{G}$ are exact functors.

Let $0\to D_1\to D_2\to D_3\to 0$ be a short exact sequence in $\mod A$.
Due to the definition of the equivalence $\mathrm{H}_A: \M(\add_{\Lambda}U)/\mathcal{R}_U \to \mod A$,
one has $H_A^{-1}(D_1)=(U_1, U_0, f)$ and $H_A^{-1}(D_3)=(U'_1, U'_0, f_3)$,
where $U_i,U'_i\in\add_{\Lambda}U$, for $i=0,1$. Then there are a projective presentation of $D_1$:
$$\Hom_{\Lambda}(U, U_1) \stackrel{(U,f_1)}\to \Hom_{\Lambda}(U, U_0) \to D_1\to 0$$
and that of $D_3$:
$$\Hom_{\Lambda}(U, U'_1) \stackrel{(U,f_3)}\to \Hom_{\Lambda}(U, U'_0) \to D_3\to 0.$$

Note that there are isomorphisms $\Hom_{\Lambda}(U, U_i)\oplus\Hom_{\Lambda}(U, U'_i) \cong \Hom_{\Lambda}(U, U_i\oplus U'_i)$ for $i=0,1$.
By Horseshoe lemma and Yoneda\rq s lemma, there exist $\alpha_i: U_i\to U_i\oplus U'_i$ and $\beta_i: U_i\oplus U'_i\to U'_i$ for $i=0,1$ and
$f_2: U_1\oplus U'_1\to U_0\oplus U'_0$ such that the following exact diagram is commutative

$$\xymatrix{
 \varepsilon_1\quad 0 \ar[r] & \Hom_{\Lambda}(U,U_1) \ar[d]_{(U,f_1)} \ar[r]^{(U,\alpha_1)} & \Hom_{\Lambda}(U,U_1\oplus U'_1) \ar[d]_{(U,f_2)} \ar[r]^{(U,\beta_1)} & \Hom_{\Lambda}(U,U'_1)\ar[d]_{(U,f_3)} \ar[r] & 0 \\
 \varepsilon_0\quad 0 \ar[r] & \Hom_{\Lambda}(U,U_0) \ar[d]\ar[r]^{(U,\alpha_0)} & \Hom_{\Lambda}(U,U_0\oplus U'_0)\ar[d] \ar[r]^{(U,\beta_0)} & \Hom_{\Lambda}(U,U'_1) \ar[d] \ar[r] & 0\qquad(3.1)\\
  0 \ar[r]  & D_1  \ar[d]\ar[r]  & D_2  \ar[d]\ar[r] & D_3  \ar[d]\ar[r]& 0 \\
    & 0   & 0  & 0 &    }$$
where $\varepsilon_0$ and $\varepsilon_1$ are
canonical split exact sequences.

Let $K_i=\Ker f_i$, for $i=1,2,3$. By Yoneda\rq s lemma and Snake lemma, there exists an exact commutative digram in $\mod \Lambda$ as follows
 $$\xymatrix{
   & 0 \ar[d]  & 0 \ar[d]  & 0 \ar[d]\\
  0\ar[r]& K_1 \ar[d] \ar[r]^{\alpha_2} & K_2\ar[d] \ar[r]^{\beta_2}& K_3 \ar[d] \\
  \varepsilon_3 \quad 0  \ar[r] & U_1 \ar[d]_{f_1} \ar[r]^{\alpha_1} & U_1\oplus U'_1 \ar[d]_{f_2} \ar[r]^{\beta_1} & U'_1 \ar[d]_{f_3} \ar[r] & 0 \qquad (3.2) \\
   \varepsilon_2 \quad 0  \ar[r] & U_0 \ar[r]^{\alpha_0} & U_0\oplus U'_0 \ar[r]^{\beta_0} & U'_0 \ar[r] & 0   }$$
with the exact short sequences $ \varepsilon_2$ and $ \varepsilon_3$ split. In particular, we have a short sequence in $\mod \Lambda$
$$0\to K_1\stackrel{\alpha_2}\to K_2\stackrel{\beta_2}\to K_3 \qquad(3.3)$$

Since the functor $\Hom_{\Lambda}(U,-)$ is left exact and
$D_i=\Coker\Hom_{\Lambda}(U, f_i)$ for all $i=1,2,3$. By applying the functor $\Hom_{\Lambda}(U,-)$ to the commutative exact diagram (3.2) and taking the cokernels,
we have  the following exact commutative diagram
$$\xymatrix{\quad  & 0 \ar[d]  & 0 \ar[d]  &0\ar[d] \\
0 \ar[r] & \Hom_{\Lambda}(U,K_1) \ar[d] \ar[r]^{(U,\alpha_2)} & \Hom_{\Lambda}(U,K_2) \ar[d] \ar[r]^{(U,\beta_3)} & \Hom_{\Lambda}(U,K_3)\ar[d] \\
 \varepsilon_1\quad 0 \ar[r] & \Hom_{\Lambda}(U,U_1) \ar[d]_{(U,f_1)} \ar[r]^{(U,\alpha_1)} & \Hom_{\Lambda}(U,U_1\oplus U'_1) \ar[d]_{(U,f_2)} \ar[r]^{(U,\beta_1)} & \Hom_{\Lambda}(U,U'_1)\ar[d]_{(U,f_3)} \ar[r] & 0 \qquad(3.4) \\
 \varepsilon_0\quad 0 \ar[r] & \Hom_{\Lambda}(U,U_0) \ar[d]\ar[r]^{(U,\alpha_0)} & \Hom_{\Lambda}(U,U_0\oplus U'_0)\ar[d] \ar[r]^{(U,\beta_0)} & \Hom_{\Lambda}(U,U'_1) \ar[d] \ar[r] & 0\\
  & D_1  \ar[d]  & D_2  \ar[d] & D_3  \ar[d]&  \\
    & 0   & 0  & 0 &    }$$

Comparing the commutative exact diagram (3.1) with (3.4), there exists an exact sequence
$$0\to \Hom_{\Lambda}(U,K_1)\stackrel{(U,\alpha_2)}\to \Hom_{\Lambda}(U,K_2)\stackrel{(U,\beta_2)}\to \Hom_{\Lambda}(U,K_3)\to 0\qquad(3.5)$$
by Snake lemma.

On the other hand, noting that the functor $\mathrm{T}_N$ is an exact functor, applying $\mathrm{T}_N$ to the diagram (3.2) gives rise to the following commutative exact diagram in $\mod\Gamma$:
$$\xymatrix{
   & 0 \ar[d]  & 0 \ar[d]  & 0 \ar[d]\\
  0\ar[r]& \mathrm{T}_N(K_1) \ar[d] \ar[r]^{\mathrm{T}_N(\alpha_2)} & \mathrm{T}_N(K_2)\ar[d] \ar[r]^{\mathrm{T}_N(\beta_2)}& \mathrm{T}_N(K_3) \ar[d] \\
  \varepsilon_5 \quad 0  \ar[r] & \mathrm{T}_N(U_1) \ar[d]_{\mathrm{T}_N(f_1)} \ar[r]^{\mathrm{T}_N(\alpha_1)} & \mathrm{T}_N(U_1\oplus U'_1) \ar[d]_{\mathrm{T}_N(f_2)} \ar[r]^{\mathrm{T}_N(\beta_1)} & \mathrm{T}_N(V_1) \ar[d]_{\mathrm{T}_N(f_3)} \ar[r] & 0 \qquad (3.6) \\
   \varepsilon_4 \quad 0  \ar[r] & \mathrm{T}_N(U_0) \ar[r]^{\mathrm{T}_N(\alpha_0)} & \mathrm{T}_N(U_0\oplus U'_0) \ar[r]^{\mathrm{T}_N(\beta_0)} & \mathrm{T}_N(V_0) \ar[r] & 0   }$$
where the short exact sequences  $\varepsilon_4$ and $\varepsilon_5$ are split.

As the functor $\Hom_{\Gamma}(V, -)$ is left exact and
$\mathbb{F}(D_i)$ = $\Coker(\Hom_{\Gamma}(V, \mathrm{T}_N(f_{i}))$ for $i=1,2,3$.
By applying the functor $\Hom_{\Gamma}(V, -)$ to the diagram (3.6)
and taking the cokernels, one  obtains  the following exact commutative diagram in $\mod B$
$$\xymatrix{
   & 0 \ar[d]  & 0 \ar[d] & 0 \ar[d] \\
 0  \ar[r] & \Hom_{\Gamma}(V,\mathrm{T}_N(K_1)) \ar[d]\ar[r]^{(\mathrm{T}_N(\alpha_2))_*} & \Hom_{\Gamma}(V, \mathrm{T}_N(K_2)) \ar[d] \ar[r]^{(\mathrm{T}_N(\beta_2))_*} & \Hom_{\Gamma}(V,\mathrm{T}_N(K_3)) \ar[d]\\
 0 \ar[r]  &\Hom_{\Gamma}(V,\mathrm{T}_N(U_1)) \ar[d]_{(\mathrm{T}_N(f_1))_*} \ar[r]^{(\mathrm{T}_N(\alpha_1))_*} & \Hom_{\Gamma}(V,\mathrm{T}_N(U_1\oplus U'_1)\ar[d]_{(\mathrm{T}_N(f_2))_*} \ar[r]^{(\mathrm{T}_N(\beta_1))_*} &\Hom_{\Gamma}(V,\mathrm{T}_N(U'_1)) \ar[d]_{(\mathrm{T}_N(f_3))_*} \ar[r] & 0 \\
 0 \ar[r]  &\Hom_{\Gamma}(V,\mathrm{T}_N(U_0)) \ar[d] \ar[r]^{(\mathrm{T}_N(\alpha_0))_*} & \Hom_{\Gamma}(V,\mathrm{T}_N(U_0\oplus U'_0)\ar[d]\ar[r]^{(\mathrm{T}_N(\beta_0))_*} &\Hom_{\Gamma}(V,\mathrm{T}_N(V_0)) \ar[d]\ar[r] & 0 \\
    & \mathbb{F}(D_1)\ar[d]  & \mathbb{F}(D_2) \ar[d] & \mathbb{F}(D_3) \ar[d] &   \\
    & 0  & 0  & 0 &  }$$
 where $(-)_*$ stands for $\Hom_{\Gamma}(V, -)$.

We claim that $(\mathrm{T}_N(\beta_2))_*$ in the diagram above is epic.
Indeed, applying the exact functor $\mathrm{T}_N$ to the short exact sequence (3.3) yields an exact sequence in $\mod\Gamma$:
   $$0\to \mathrm{T}_N(K_1)\stackrel{\mathrm{T}_N(\alpha_2)}\to \mathrm{T}_N(K_2)\stackrel{\mathrm{T}_N(\beta_2)}\to \mathrm{T}_N(K_3) \qquad (3.7)$$
Since $(\mathrm{T}_M, \mathrm{T}_N)$ is an adjoint pair and the functors $\Hom_{\Lambda}(\mathrm{T}_M(V),-)$ and $\Hom_{\Gamma}(V, -)$ are left exact,
by applying the functor $\Hom_{\Lambda}(\mathrm{T}_M(V), -)$ to the exact sequence (3.3), and $\Hom_{\Gamma}(V, - )$ to the sequence (3.7),  respectively,
one has the following commutative exact diagram
$$\xymatrix{
 \quad 0 \ar[r] & \Hom_{\Lambda}(\mathrm{T}_M(V), K_1) \ar[d]_{\cong} \ar[r]^{(\alpha_2)_\dagger} & \Hom_{\Lambda}(\mathrm{T}_M(V),K_2) \ar[d]_{\cong} \ar[r]^{(\beta_2)\dagger} & \Hom_{\Lambda}(\mathrm{T}_M(V),K_3)\ar[d]_{\cong}  &  \\
 \quad 0 \ar[r] & \Hom_{\Gamma}(V,\mathrm{T}_N(K_1)) \ar[r]^{(\mathrm{T}_N(\alpha_2))_{\ast}} & \Hom_{\Gamma}(V, \mathrm{T}_N(K_2))\ar[r]^{((\mathrm{T}_N(\beta_2))_{\ast}} & \Hom_{\Gamma}(V, \mathrm{T}_N(K_3))& }$$
where $(-)_{\dagger}$ stands for $\Hom_{\Lambda}(\mathrm{T}_M(V), -)$.
Because of $\mathrm{T}_M(V)\in \add U$ and the exactness of the sequence (3.5),
one has that $(\beta_2)\dagger$ in  the diagram above is epic,
and so is $((\mathrm{T}_N(\beta_2))_{\ast}$, as claimed.


By Snake lemma again, one gets an exact sequence
$0\to  \mathbb{F}(D_1)\to  \mathbb{F}(D_2)\to  \mathbb{F}(D_3)\to 0$ in mod$B$,
which means that the functor $\mathbb{F}$ is exact.

Similarly, we have that the functor $\mathbb{G}$ is exact.

{\bf Step 2}  We verify that $\mathbb{F}$ and $\mathbb{G}$ take projective modules to projective modules.

Let $P_0$ be a projective $A$-module. By Lemma 2.5,  there exists a $\Lambda$-module  $U_0 \in \add_{\Lambda} U$ such that $P_0=\Hom_{\Lambda}(U,U_0)$, which implies $\mathrm{H}_A^{-1}(P_0)=(0,U_0, 0)$. Notice that  $\mathrm{T}_N(U_0)\in \add (\mathrm{T}_N(U))\subseteq \add V$,
 one has $\widetilde{\mathrm{T}_N}\mathrm{H}_A^{-1}(P_0)=(0, \mathrm{T}_N(U_0), 0)$. So that  $\mathbb{F}(P_0)=\Hom_{\Gamma}(V,\mathrm{T}_N(U_0))$
is a projective $B$-module.

The case of $\mathbb{G}$ can be proved in a similar way.

{\bf Step 3} We verity that $\mathbb{F}$ and $\mathbb{G}$ induce a  separable equivalence between $A$ and $B$.

Since $\mathbb{F}$ and $\mathbb{G}$ are additive exact functors,
there exist natural isomorphisms
$\mathbb{F}\cong \mathbb{F}(A)\otimes_A-: \mod A\to \mod B$
and $\mathbb{G}\cong \mathbb{G}(B)\otimes_B-: \mod B\to \mod A$,
where $\mathbb{F}(A)=\Hom_{\Gamma}(_\Gamma V_B, \\ {_\Gamma}N\otimes_{\Lambda}U_A)$ is a $(B,A)$-bimodule
and $\mathbb{G}(B)=\Hom_{\Lambda}(_{\Lambda}U_A,   {_\Lambda}M\otimes_{\Gamma}V_B)$ is an $(A,B)$-bimodule, by Watt\rq s Theorem (see \cite[Theorem 3.33]{R1}).
Noting that $\mathbb{F}$ takes the projective modules to projective modules,
we have $_B\mathbb{F}(A)\cong {_B}\mathbb{F}(A)\otimes_AA$ is a projective $B$-module.
On the other hand, since $\mathbb{F}\cong  {_B}\mathbb{F}(A)\otimes_A-$ is an exact functor,
one gets that $\mathbb{F}(A)_A$ is also a projective right $A$-module.
Similarly, we have that $_A\mathbb{G}(B)_B$ is projective as one-sided module.

Note that $\mathbb{F}$ and $\mathbb{G}$ are additive exact functors,
so is the composition of $\mathbb{F}$ and $\mathbb{G}$.
By Watt\rq s Theorem  again, we have $\mathbb{G}\circ \mathbb{F}\cong \mathbb{GF}(A)\otimes_A-: \mod A\to \mod A$,
where $ \mathbb{GF}(A)=\Hom_\Lambda (_{\Lambda}U_A, {_\Lambda}M\otimes_\Gamma N\otimes_\Lambda U_A)$ is an $A$-bimodule.
On the other hand, there are natural isomorphisms
$\mathbb{G}\circ \mathbb{F}\cong (\mathbb{G}(B)\otimes_B-)\circ( \mathbb{F}(A)\otimes_A-)\cong (\mathbb{G}(B)\otimes_B \mathbb{F}(A))\otimes_A-$.
Thus we have an $A$-bimodule isomorphism
  $_A\mathbb{G}(B)\otimes_B \mathbb{F}(A)_A\cong _A\mathbb{GF}(A)_A=\Hom_{\Lambda}(_{\Lambda}U_A, M\otimes_\Gamma N\otimes_\Lambda U_A)$.

Since $\Lambda$ and $\Gamma$ are symmetrically separably equivalent by assumption, there exists $\Lambda$-bimodule isomorphism
$\rho=(\rho_1, \rho_2): M\otimes_\Gamma N \to \Lambda\oplus X$.
Note that $A=\Hom_{\Lambda}(U_A,U_A)$ and $X^{\prime} =:\Hom_{\Lambda}(_\Lambda U_A, _\Lambda X\otimes_\Lambda U_A)$ are  $A$-bimodules,
it is not hard to check that the natural isomorphism
$\bar{\rho}: \Hom_{\Lambda}(U,  M\otimes_\Gamma N\otimes_\Lambda U_A)\to
\Hom_{\Lambda}(U,U)\oplus \Hom_{\Lambda}(_\Lambda U_A, _\Lambda X\otimes_\Lambda U_A)$
given by $\bar{\rho}(f)=(\mu(\rho_1\otimes \mathrm{Id}_U)f, (\rho_2\otimes \mathrm{Id}_U)f)$,
where $\mu: \Lambda\otimes_{\Lambda}U\to U$ is the multiplication map,
is an $A$-bimodule isomorphism.
Therefore,  $_A\mathbb{G}(B)\otimes_B \mathbb{F}(A)_A\cong A\oplus X^{\prime}$, as desired.

Similarly, one has that there exists a $B$-bimodule isomorphism
$\mathbb{F}(A)\otimes_A \mathbb{G}(B)\cong B\oplus Y'$, where $Y'= \Hom_{\Gamma}(_{\Gamma}V_B,
{_\Gamma}Y\otimes_{\Gamma}V_B)$ is a $B$-module.

{\bf Step 4}  We verity that $(\mathbb{F}(A)\otimes_A-, \mathbb{G}(B)\otimes_B-)$ and $(\mathbb{G}(B)\otimes_B-, \mathbb{F}(A)\otimes_A-)$ are adjoint pairs.

Since $N\otimes_{\Lambda}U\in \add V$ and $B=\Hom_{\Gamma}(V,V)$, we have isomorphisms
\begin{align*}
&\Hom_B(\mathbb{F}(A), B)\\
&=\Hom_B(\Hom_{\Gamma}(V, N\otimes_{\Lambda}U), \Hom_{\Gamma}(V, V))\\
&\cong \Hom_{\Gamma}(N\otimes_{\Lambda}U, V)  \quad (\text{by Yoneda's lemma})\\
&\cong \Hom_{\Lambda}(U, M\otimes_{\Gamma}V)\quad (\text{by the adjoint pair $(N\otimes_{\Lambda}-, M\otimes_{\Gamma}-)$})\\
&=\mathbb{G}(B).
\end{align*}
Thus, for any $B$-module $L$, $\mathbb{G}(B)\otimes_BL\cong \Hom_B(\mathbb{F}(A), B)\otimes_BL\cong \Hom_B(\mathbb{F}(A), L)$,
since $\mathbb{F}(A)$ is a projective $B$-module. This implies that  $(\mathbb{F}(A)\otimes_A-, \mathbb{G}(B)\otimes_B -)$ is an adjoint pair.
In a similar way, we have that $(\mathbb{G}(B)\otimes_B-, \mathbb{F}(A)\otimes_A-)$ is also an adjoint pair.
The proof of Theorem 3.1 is completed.
\end{proof}

As applications of Theorem 3.1, we shall construct new symmetrical separable equivalences from given ones.

Recall that an Artin algebra $\Lambda$ is said to be {\it representation-finite},
provided that there exists finitely many non-isomorphic indecomposable $\Lambda$-modules.
Recall that a $\Lambda$-module $U$ is called an {\it additive generator},
if $\add_{\Lambda}U=\mod \Lambda$. It is clear to see that an Artin algebra is representation-finite if and only of it has an additive
generator. Let $\Lambda$ be a representation-finite Artin algebra. The endomorphism algebra of an additive generator for $\Lambda$
is called {\it the Auslander algebra } of $\Lambda$, which is unique up to Morita equivalence. According to Theorem 3.1, we have the following result.

\begin{theorem}
Let $\Lambda$ and $\Gamma$ be representation-finite. If $\Lambda$ and $\Gamma$ are symmetrically separably equivalent,
then their Auslander algebras are also symmetrically separably equivalent.
\end{theorem}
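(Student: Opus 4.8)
The plan is to obtain this as an immediate application of Theorem 3.1 by choosing $U$ and $V$ to be additive generators. Since $\Lambda$ is representation-finite, it admits an additive generator $U\in\mod\Lambda$, i.e. a module with $\add_{\Lambda}U=\mod\Lambda$, and by definition $\End_{\Lambda}U$ is the Auslander algebra of $\Lambda$; likewise pick an additive generator $V\in\mod\Gamma$, so that $\End_{\Gamma}V$ is the Auslander algebra of $\Gamma$. The only thing one really has to check before invoking Theorem 3.1 is that the modules $N\otimes_{\Lambda}U$ and $M\otimes_{\Gamma}V$ are \emph{finitely generated}, so that they genuinely live in $\mod\Gamma$ and $\mod\Lambda$ respectively. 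This holds because, by Lemma 2.4(1), $_{\Gamma}N_{\Lambda}$ and $_{\Lambda}M_{\Gamma}$ are finitely generated as one-sided modules; concretely, if $N$ is generated by finitely many elements as a right $\Lambda$-module and $U$ is finitely generated over $\Lambda$, then the simple tensors generate $N\otimes_{\Lambda}U$ over $\Gamma$, and symmetrically for $M\otimes_{\Gamma}V$.

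Granting that, we have $N\otimes_{\Lambda}U\in\mod\Gamma=\add_{\Gamma}V$ and $M\otimes_{\Gamma}V\in\mod\Lambda=\add_{\Lambda}U$, so the hypotheses of Theorem 3.1 are satisfied. Hence Theorem 3.1 yields a symmetrical separable equivalence between $\End_{\Lambda}U$ and $\End_{\Gamma}V$, which are precisely the Auslander algebras of $\Lambda$ and $\Gamma$.

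Finally, I would remark that the statement does not depend on the chosen additive generators: any two Auslander algebras of $\Lambda$ are Morita equivalent, and a Morita equivalence is in particular a symmetrical separable equivalence (Remark 2.2(1)), so composing the relevant bimodules transports the symmetrical separable equivalence to any other choice of additive generators. There is essentially no hard step here; the one place to be slightly careful is the finite-generatedness remark above and, if one wants full rigour in the last paragraph, the (routine) verification that symmetrical separable equivalences compose with Morita equivalences on either side via the tensor product of the defining bimodules.
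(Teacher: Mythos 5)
Your proposal is correct and follows exactly the same route as the paper: choose additive generators $U$ and $V$, observe that the hypotheses of Theorem 3.1 hold automatically since $\add_{\Lambda}U=\mod\Lambda$ and $\add_{\Gamma}V=\mod\Gamma$, and invoke Theorem 3.1. The extra remarks you include (finite generation of the tensor products and independence from the choice of generator) are sensible but routine, and the paper simply leaves them implicit.
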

\begin{proof}
Let $_{\Lambda}U$ and $_{\Gamma}V$ be additive generator for $\Lambda$ and $\Gamma$, respectively. Then $U$ and $V$ satisfy the conditions in
Theorem 3.1, and so the theorem follows.
\end{proof}

Let $\Lambda$ be an Artin algebra. Recall from \cite{HX} that a projective $\Lambda$-module $P$ is said to be {\it $\nu$-stable projective},
if $\nu^i(P)$ is projective for all $i\geq 1$, where $\nu=\mathrm{D}\Hom_{\Lambda}(-,\Lambda)$ is the Nakayama functor of $\Lambda$.
We use $\Lambda$-stp to denote  subcategory of $\mod\Lambda$ consisting of all non-isomorphic $\nu$-stable projective $\Lambda$-modules.
Following \cite{HX}, {\it the Frobenius part} of $\Lambda$, which is self-injective and unique up to Morita equivalences,
is defined to be the endomorphism algebra of  a projective $\Lambda$-module $V$ with $\add_{\Lambda}V= \Lambda$-stp.
And $\Lambda$ is said to be {\it Frobenius-finite}, if its Frobenius part is representation-finite.
Frobenius parts of Artin algebras and Frobenius-finite algebras play crucial roles
in the study of the Auslander-Reiten conjecture, dominant dimensions and stable equivalences (see \cite{CX,HX,LZZ,Xi1} for detail).

\begin{lemma}
Let $\Lambda$ and $\Gamma$ be symmetrically separably equivalent defined by bimodules $_{\Lambda}M_{\Gamma}$ and $_{\Gamma}N_{\Lambda}$,
and let $K$ be a $\Lambda$-module and $L$ a $\Gamma$-module.

$\mathrm{(1)}$ For any $i\geq 1,$  we have
\begin{center}
 $\nu_{\Gamma}^i(N\otimes_{\Lambda}K)\cong N\otimes_{\Lambda}\nu_{\Lambda}^i(K)$, and
 $\nu_{\Lambda}^i(M\otimes_{\Gamma}L)\cong M\otimes_{\Gamma}\nu_{\Gamma}^i(L).$
\end{center}

$\mathrm{(2)}$  If $\Lambda$-stp=$\add_{\Lambda}K$, then we have $\Gamma$-stp=$\add_{\Gamma}(N\otimes_{\Lambda}K);$

$\mathrm{(3)}$  If $\Gamma$-stp=$\add_{\Gamma}L$, then we have $\Lambda$-stp=$\add_{\Lambda}(M\otimes_{\Gamma}L).$
\end{lemma}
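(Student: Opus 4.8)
The plan is to establish part (1) first by a direct functorial computation, and then deduce parts (2) and (3) as formal consequences using Lemma 2.4 together with the fact that $\mathrm{T}_N$ and $\mathrm{T}_M$ are exact functors sending projectives to projectives.

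For part (1), I would start from the standard identity $\nu_\Lambda = \mathrm{D}\,\Hom_\Lambda(-,\Lambda) \cong \mathrm{D}\Lambda \otimes_\Lambda -$ on $\mod\Lambda$, so that $\nu_\Lambda^i \cong (\mathrm{D}\Lambda)^{\otimes i} \otimes_\Lambda -$ up to the natural associativity isomorphisms. The key point is to show $N \otimes_\Lambda \nu_\Lambda(K) \cong \nu_\Gamma(N \otimes_\Lambda K)$ naturally. Because $\mathrm{T}_N = N\otimes_\Lambda -$ is exact and preserves projectives (Lemma 2.4(2)), and $(\mathrm{T}_M,\mathrm{T}_N)$ and $(\mathrm{T}_N,\mathrm{T}_M)$ are adjoint pairs, the functor $\mathrm{T}_N$ sends a minimal projective presentation of a module over $\Lambda$ to a projective presentation over $\Gamma$; combined with the compatibility of $\mathrm{T}_N$ with $\Hom$ against projectives and with the duality $\mathrm{D}$ (both $\Lambda$ and $\Gamma$ are Artin $R$-algebras, so $\mathrm{D}$ is the same Matlis duality over $R$ and $\mathrm{T}_N$ commutes with it appropriately via the adjunction), one gets the desired natural isomorphism of functors $\mathrm{T}_N \circ \nu_\Lambda \cong \nu_\Gamma \circ \mathrm{T}_N$. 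Iterating $i$ times gives $\nu_\Gamma^i(N\otimes_\Lambda K) \cong N\otimes_\Lambda \nu_\Lambda^i(K)$; the second isomorphism is the symmetric statement for $\mathrm{T}_M$. I expect the careful bookkeeping of how $\mathrm{T}_N$ interacts with the Nakayama functor — essentially verifying $N \otimes_\Lambda \mathrm{D}(\Lambda) \cong \mathrm{D}(\Gamma) \otimes_\Gamma N$ as $(\Gamma,\Lambda)$-bimodules via the adjunction isomorphisms $\Hom_\Gamma(N\otimes_\Lambda P, -) \cong \Hom_\Lambda(P, M\otimes_\Gamma -)$ — to be the main obstacle, since it is where the symmetry hypothesis and the one-sided projectivity of the bimodules are really used.

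For part (2), suppose $\Lambda\text{-stp} = \add_\Lambda K$. I would verify the two inclusions. If $Q$ is a $\nu_\Gamma$-stable projective $\Gamma$-module, then $\mathrm{T}_M(Q) = M\otimes_\Gamma Q$ is a projective $\Lambda$-module (Lemma 2.4(2)), and by part (1) each $\nu_\Lambda^i(\mathrm{T}_M Q) \cong \mathrm{T}_M(\nu_\Gamma^i Q)$ is projective, so $\mathrm{T}_M(Q) \in \Lambda\text{-stp} = \add_\Lambda K$; applying $\mathrm{T}_N$ and using Lemma 2.4(3), $\mathrm{T}_N\mathrm{T}_M(Q) \cong Q \oplus (Y\otimes_\Gamma Q)$ lies in $\add_\Gamma(N\otimes_\Lambda K)$, hence so does its direct summand $Q$. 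Conversely, $\nu_\Gamma^i(N\otimes_\Lambda K) \cong N\otimes_\Lambda \nu_\Lambda^i(K)$ is projective for all $i\geq 1$ since $\nu_\Lambda^i(K)$ is projective (as $K\in\Lambda\text{-stp}$) and $\mathrm{T}_N$ preserves projectives; also $N\otimes_\Lambda K$ is itself projective, so every module in $\add_\Gamma(N\otimes_\Lambda K)$ is $\nu_\Gamma$-stable projective, giving $\add_\Gamma(N\otimes_\Lambda K)\subseteq \Gamma\text{-stp}$. Combining the two inclusions yields the equality. Part (3) is proved identically with the roles of $\Lambda,\Gamma$ and $M,N$ interchanged, using the isomorphism $\mathrm{T}_M\mathrm{T}_N(-) \cong \mathrm{Id}\oplus(X\otimes_\Lambda -)$ from Lemma 2.4(3).
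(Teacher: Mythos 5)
Your treatment of parts (2) and (3) coincides with the paper's argument essentially step for step: from (1) one gets $\add_\Gamma(N\otimes_\Lambda K)\subseteq \Gamma\text{-stp}$ directly, and conversely a $Q\in\Gamma\text{-stp}$ is carried by $\mathrm T_M$ into $\Lambda\text{-stp}=\add_\Lambda K$, whence $\mathrm T_N\mathrm T_M Q\in\add_\Gamma(N\otimes_\Lambda K)$, and $Q\mid \mathrm T_N\mathrm T_M Q$ by Lemma 2.4(3) finishes it. That is exactly the paper's proof.

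For part (1), however, your proposal is a plan rather than a proof. You correctly identify that everything comes down to a natural isomorphism $\mathrm T_N\circ\nu_\Lambda\cong\nu_\Gamma\circ\mathrm T_N$ (equivalently the $(\Gamma,\Lambda)$-bimodule isomorphism $N\otimes_\Lambda\mathrm D\Lambda\cong\mathrm D\Gamma\otimes_\Gamma N$), and you flag it as ``the main obstacle''; but you then stop, with ``I expect'' standing in for the verification. This is where the real content lies, and the paper supplies it by a short, elementary chain of isomorphisms applied to a single module $K$:
\begin{align*}
\nu_\Gamma(N\otimes_\Lambda K)&=\mathrm D\Hom_\Gamma(N\otimes_\Lambda K,\Gamma)\cong\mathrm D\Hom_\Lambda(K,M)\\
&\cong\mathrm D\bigl(\Hom_\Lambda(K,\Lambda)\otimes_\Lambda M\bigr)\cong\Hom_\Lambda(M,\nu_\Lambda K)\\
&\cong\Hom_\Gamma(\Gamma,N\otimes_\Lambda\nu_\Lambda K)\cong N\otimes_\Lambda\nu_\Lambda K,
\end{align*}
using the adjunction $(\mathrm T_N,\mathrm T_M)$, the projectivity of $_\Lambda M$ (so that $\Hom_\Lambda(K,\Lambda)\otimes_\Lambda M\cong\Hom_\Lambda(K,M)$ for $K$ finitely generated), the tensor--Hom adjunction against $\mathrm D$, and finally the adjunction $(\mathrm T_M,\mathrm T_N)$. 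Induction then gives the statement for all $i\geq1$. So your route is the right one and would become a complete proof once this chain (or an equivalent argument producing the bimodule isomorphism you name) is actually carried out; as written, the key step is asserted but not established.
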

\begin{proof}
(1) We only prove $\nu_{\Gamma}^i(N\otimes_{\Lambda}K)\cong N\otimes_{\Lambda}\nu_{\Lambda}^i(K)$.

Note that $(M\otimes_{\Gamma}-, N\otimes_{\Lambda}-)$ and $( N\otimes_{\Lambda}-, M\otimes_{\Gamma}-)$ are adjoint pairs by assumption.
Thus we have
\begin{align*}
\nu_{\Gamma}(N\otimes_{\Lambda}K)
&=\mathrm{D}\Hom_{\Gamma}(N\otimes_{\Lambda}K, \Gamma)\\
&\cong \mathrm{D}\Hom_{\Lambda}(K, M )\\
&\cong \mathrm{D}(\Hom_{\Lambda}(K, \Lambda)\otimes_{\Lambda}M) \quad(\text{because $_{\Lambda}M$ is projective})\\
&\cong \Hom_{\Lambda}(M, \nu_{\Lambda}(K)) \quad(\text{by the adjoint isomorphism} )\\
&\cong \Hom_{\Lambda}(M\otimes_{\Gamma}\Gamma, \nu_{\Lambda}(K))\\
&\cong \Hom_{\Gamma}(\Gamma, N\otimes_{\Lambda}\nu_{\Lambda}(K))\\
& \cong N\otimes_{\Lambda}\nu_{\Lambda}(K).
\end{align*}
Hence $\nu_{\Gamma}^2(N\otimes_{\Lambda}K)\cong \nu_{\Gamma}(\nu_{\Gamma}(N\otimes_{\Lambda}K))\cong
\nu_{\Gamma}(N\otimes \nu_{\Lambda}(K))\cong N\otimes_{\Lambda}\nu_{\Lambda}^2(K).$
The assertion is obtained by induction.

(2) For any $i\geq 1$, we know from (1) that $\nu_{\Gamma}^i(N\otimes_{\Lambda}K)\cong N\otimes_{\Lambda}\nu_{\Lambda}^i(K)$.
Since $K\in\Lambda$-stp, then $\nu_{\Lambda}^i(K)$ is projective,
and so is $N\otimes_{\Lambda}\nu_{\Lambda}^i(K)$ by Lemma 2.4(2),
which yields $N\otimes_{\Lambda}K\in$ $\Gamma$-stp.
For any $L\in$ $\Gamma$-stp, it follows from (1) that $\nu_{\Lambda}^i(M\otimes_{\Gamma}L)\cong M\otimes_{\Gamma}\nu_{\Gamma}^i(L)$
is projective. Thus $M\otimes_{\Gamma}L\in$ $\Lambda$-stp = $\add_{\Lambda}K$,
and hence $N\otimes_{\Lambda}M\otimes_{\Gamma}L\in \add (N\otimes_{\Lambda}K)$.
Notice that $_{\Gamma}L|_{\Gamma}(N\otimes_{\Lambda}M\otimes_{\Gamma}L)$ by Lemma 2.4(3),
we have $\Gamma$-stp = $\add (N\otimes_{\Lambda}K)$, as desired.

(3) The proof is similar to (2).
\end{proof}
\begin{theorem}
Let $\Lambda$ and $\Gamma$ be symmetrically separably equivalent.

$\mathrm{(1)}$ There is a symmetrical separable equivalence between the Frobenius part of $\Lambda$ and that of $\Gamma$;

$\mathrm{(2)}$ $\Lambda$ is Frobenius-finite if and only if so is $\Gamma$.
\end{theorem}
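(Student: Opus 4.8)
The plan is to read part (1) off Theorem 3.1 essentially for free, using Lemma 3.3 to identify the two endomorphism algebras appearing there with the respective Frobenius parts, and then to deduce part (2) from part (1) together with the invariance of representation-finiteness under symmetrical separable equivalences.

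For (1), I would first pick a projective $\Lambda$-module $U$ with $\add_{\Lambda}U=\Lambda$-stp, so that $A:=\End_{\Lambda}U$ is by definition the Frobenius part of $\Lambda$, and then set $V:=N\otimes_{\Lambda}U$. By Lemma 2.4(2) the functor $N\otimes_{\Lambda}-$ sends projectives to projectives, so $V$ is a projective $\Gamma$-module; and by Lemma 3.3(2) we have $\add_{\Gamma}V=\Gamma$-stp, so $B:=\End_{\Gamma}V$ is the Frobenius part of $\Gamma$. It then remains to check the two hypotheses of Theorem 3.1 for this pair $(U,V)$. The first, $N\otimes_{\Lambda}U\in\add_{\Gamma}V$, is automatic since $V=N\otimes_{\Lambda}U$. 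For the second, $M\otimes_{\Gamma}V\in\add_{\Lambda}U$, I would invoke Lemma 3.3(3): because $\add_{\Gamma}V=\Gamma$-stp, that lemma gives $\Lambda$-stp $=\add_{\Lambda}(M\otimes_{\Gamma}V)$, whence $M\otimes_{\Gamma}V\in\Lambda$-stp $=\add_{\Lambda}U$. (Equivalently, Lemma 2.4(3) yields $M\otimes_{\Gamma}V\cong U\oplus(X\otimes_{\Lambda}U)$, and $\Lambda$-stp is closed under direct summands.) Theorem 3.1 then delivers a symmetrical separable equivalence between $A$ and $B$, i.e. between the Frobenius parts of $\Lambda$ and $\Gamma$; since the Frobenius part is only determined up to Morita equivalence and a Morita equivalence is in particular a symmetrical separable equivalence (cf.\ Remark 2.2(1)), the conclusion is independent of all the choices made.

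For (2), observe that by part (1) the Frobenius part $A$ of $\Lambda$ and the Frobenius part $B$ of $\Gamma$ are symmetrically separably equivalent, so it suffices to show that such an equivalence preserves representation-finiteness. I would argue directly with the additive exact functors $\mathbb{F}\colon\mod A\to\mod B$ and $\mathbb{G}\colon\mod B\to\mod A$ constructed in the proof of Theorem 3.1, which satisfy $\mathbb{G}\circ\mathbb{F}\cong\mathrm{Id}_{\mod A}\oplus(X'\otimes_{A}-)$. From this splitting, every indecomposable $A$-module $W$ is a direct summand of $\mathbb{G}(\mathbb{F}(W))$, hence a direct summand of $\mathbb{G}(W')$ for some indecomposable summand $W'$ of the finitely generated $B$-module $\mathbb{F}(W)$. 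Consequently, if $B$ is representation-finite there are only finitely many indecomposable $B$-modules, each contributing finitely many indecomposable summands to its $\mathbb{G}$-image, so $A$ has only finitely many indecomposable modules and is representation-finite; the converse follows symmetrically through $\mathbb{F}$. Thus $A$ is representation-finite if and only if $B$ is, which is precisely the statement that $\Lambda$ is Frobenius-finite if and only if $\Gamma$ is.

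The conceptual content has already been isolated in Lemma 3.3 and Theorem 3.1, so I expect no real difficulty here; the only point needing a little care is confirming that the single module $V=N\otimes_{\Lambda}U$ is at once projective over $\Gamma$, satisfies $\add_{\Gamma}V=\Gamma$-stp, and meets both $\add$-conditions of Theorem 3.1 — and all three follow immediately from Lemmas 2.4(2) and 3.3. If one prefers to cite, rather than reprove, the preservation of representation type under separable equivalences, part (2) shortens accordingly, but the argument sketched above is self-contained.
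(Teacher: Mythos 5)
Your proof of part (1) is essentially the paper's: pick generators of $\Lambda$-stp and $\Gamma$-stp, invoke Lemma 3.3(2)(3) to verify the $\add$-hypotheses, and apply Theorem 3.1. (The paper fixes $K$ and $L$ independently with $\Lambda$-stp $=\add K$, $\Gamma$-stp $=\add L$; you set $V:=N\otimes_\Lambda U$ outright — same thing up to additive equivalence, with the Morita-independence remark correctly dispatching the choice.) For part (2) you diverge from the paper in a small but real way: the paper simply cites Peacock's result that representation-finiteness is preserved under separable equivalence (\cite[Theorem~6]{P1}), whereas you reprove that fact directly from the bimodule splitting $\mathbb{G}\circ\mathbb{F}\cong\mathrm{Id}_{\mod A}\oplus(X'\otimes_A-)$ established inside the proof of Theorem~3.1, using Krull--Schmidt to count indecomposables. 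Your argument is correct and self-contained — it buys independence from the external reference at the cost of a few extra lines — and, as you note yourself, collapses to the paper's proof once one is willing to cite. No gaps.
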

\begin{proof}
Assume that there exist a $\Lambda$-module $K$ and a $\Gamma$-module $L$ such that $\Lambda$-stp =$\add K$ and $\Gamma$-stp =$\add L$, respectively.

(1) From Lemma 3.3(2) and (3), it follows $N\otimes_{\Lambda}K\in$  $\Gamma$-stp = $\add L$ and $M\otimes_{\Gamma}L\in \add K$.
Hence there exists a symmetrical separable equivalence between $\End_{\Lambda}K$ and $\End_{\Gamma}L$ by Theorem 3.1.

(2) Assume  that $\Lambda$ is Frobenius-finite, then $\End_{\Lambda}K$ is  representation-finite.
Noting that $\End_{\Lambda}K$ and $\End_{\Gamma}L$ are symmetrically separably equivalent by (1),
one has that $\End_{\Gamma}L$ is representation-finite  by \cite[Theorem 6]{P1}, which means that $\Gamma$ is Frobenius-finite, as desired.

Analogously, we can prove that $\Lambda$ is Frobenius-finite, when $\Gamma$ is so.
\end{proof}

As a consequence of Theorem 3.4, we get the following corollary,
in which the second result is due to \cite[Corollary 5.4]{HX}.
\begin{corollary}
Let $\Lambda$ be an Artin algebra and $G$ be a finite group such that the order of $G$ is invertible in $\Lambda$.
Suppose that  $\Lambda G$ is the skew group algebra of $\Lambda$ by $G$. Then

$\mathrm{(1)}$ There is a symmetrical separable equivalence between the Frobenius part of $\Lambda G$ and that of $\Lambda$;

$\mathrm{(2)}$ $\Lambda$ is Frobenius-finite if and only if so is $\Lambda G$.
\end{corollary}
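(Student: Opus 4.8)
The plan is to deduce this directly from Theorem 3.4, once we exhibit a symmetrical separable equivalence between $\Lambda G$ and $\Lambda$. First I would invoke Example 2.3(3) (that is, \cite[Lemma 1]{BT}): since the order of $G$ is invertible in $\Lambda$, the skew group ring $\Lambda G$ and $\Lambda$ are symmetrically separably equivalent. In particular the hypotheses of Theorem 3.4 are satisfied with the pair $(\Lambda G,\Lambda)$ in place of $(\Lambda,\Gamma)$.

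With that in place, part (1) is immediate: applying Theorem 3.4(1) to $\Lambda G$ and $\Lambda$ yields a symmetrical separable equivalence between the Frobenius part of $\Lambda G$ and the Frobenius part of $\Lambda$. Likewise, part (2) follows at once from Theorem 3.4(2) applied to the same pair: Frobenius-finiteness is preserved in both directions under a symmetrical separable equivalence, so $\Lambda$ is Frobenius-finite if and only if $\Lambda G$ is. One may also note that the second assertion recovers \cite[Corollary 5.4]{HX}.

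There is no genuine obstacle here; the mathematical content is entirely contained in Theorem 3.4 (hence ultimately in Theorem 3.1 and Lemma 3.3) together with the fact, recorded in Example 2.3(3), that the skew group construction with $|G|$ invertible in $\Lambda$ produces a symmetrical separable equivalence. The only point I would take care to state explicitly is that Example 2.3(3) indeed provides a \emph{symmetrical} separable equivalence in the sense of Definition 2.1 — i.e.\ the requisite adjoint-pair conditions hold — and not merely a one-sided separable equivalence; this is already part of the cited statement, so invoking it suffices.
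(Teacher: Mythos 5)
Your argument is correct and coincides with the paper's own proof: cite \cite[Lemma 1]{BT} (recorded as Example 2.3(3)) to obtain the symmetrical separable equivalence between $\Lambda G$ and $\Lambda$, then apply Theorem 3.4(1) and (2). The only addition you make is the (reasonable) explicit remark that the cited equivalence is symmetrical in the sense of Definition 2.1, which the paper takes as read.
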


\begin{proof} According to  \cite[Lemma 1]{BT}, it follows that $\Lambda$ and $\Lambda G$ are symmetrically separably equivalent.
And this corollary follows from Theorem 3.4 immediately.
\end{proof}

Let $\Lambda$ be an Artin algebra. Recall from \cite{M1} that a finitely generated $\Lambda$-module $T$ is said to be {\it  a $m$-tilting module},
if the following conditions are satisfied. (1) $\pd_{\Lambda} T\leq m$; (2) $\Ext_{\Lambda}^{\geq 1}(T, T)=0$; and (3) $_{\Lambda}\Lambda$ has a $m$-$\add T$-coresolution
$0\to _{\Lambda}\Lambda \to T_0\to T_1\to \cdots \to T_m\to 0.$ Recall from \cite{HR} that an Artin algebra  is called a {\it tilted algebra}, if it is an
endomorphism algebra of a 1-tilting module over a hereditary algebra. It is clear to see that an Artin algebra $\Lambda$ is a tilted algebra
if and only if there is a 1-tilting $\Lambda$-module such that its endomorphism algebra is hereditary. It is well known that tilting modules and tilted algebras are central in the tilting theory.

In the following, we shall construct a new symmetrical separable equivalence from a given
stable equivalence of adjoint type and a tilting module.

\begin{lemma}
Let $\Lambda$ and $\Gamma$ be stably equivalent of adjoint type  defined by $_{\Lambda}M_{\Gamma}$ and $_{\Gamma}N_{\Lambda}$,
and let $T$ be a $m$-tilting $\Lambda$-module. Then

$\mathrm{(1)}$ $N\otimes_{\Lambda}T$ is  a $m$-tilting $\Gamma$-module;

$\mathrm{(2)}$  $M\otimes_{\Gamma}N\otimes_{\Lambda}T\in \add_{\Lambda}T.$
\end{lemma}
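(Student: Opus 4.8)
The plan is to verify each of the three defining conditions of an $m$-tilting module for $N\otimes_\Lambda T$ by transporting the corresponding property of $T$ across the functors $\mathrm{T}_N$ and $\mathrm{T}_M$, using heavily that for a stable equivalence of adjoint type the bimodules $X$ and $Y$ are \emph{projective}, so that $\mathrm{T}_M\mathrm{T}_N \cong \mathrm{Id}\oplus (X\otimes_\Lambda-)$ and $\mathrm{T}_N\mathrm{T}_M\cong \mathrm{Id}\oplus(Y\otimes_\Gamma-)$ with projective-module-valued summands, and that both functors are exact and preserve projectives (Lemma 2.4). First, for condition (1), $\pd_\Gamma(N\otimes_\Lambda T)\le m$: take a projective resolution $0\to P_m\to\cdots\to P_0\to T\to 0$ of length $m$ over $\Lambda$, apply the exact functor $\mathrm{T}_N$, and observe each $\mathrm{T}_N(P_i)$ is projective over $\Gamma$ by Lemma 2.4(2); this gives a projective resolution of $N\otimes_\Lambda T$ of length $\le m$.

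For condition (2), $\Ext^{\ge1}_\Gamma(N\otimes_\Lambda T,\, N\otimes_\Lambda T)=0$: I would compute $\Ext^j_\Gamma(\mathrm{T}_N T,\mathrm{T}_N T)$ via the adjunction. Since $(\mathrm{T}_N,\mathrm{T}_M)$ is an adjoint pair and $\mathrm{T}_M,\mathrm{T}_N$ are exact, one gets $\Ext^j_\Gamma(\mathrm{T}_N T, \mathrm{T}_N T)\cong \Ext^j_\Lambda(T, \mathrm{T}_M\mathrm{T}_N T)$ for all $j$ (apply adjunction to a projective resolution of $T$, noting $\mathrm{T}_N$ sends it to a projective resolution of $\mathrm{T}_N T$). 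Now $\mathrm{T}_M\mathrm{T}_N T\cong T\oplus(X\otimes_\Lambda T)$, and since $_\Lambda X$ is projective as a left module (stable equivalence of adjoint type, Remark 2.2(2)), $X\otimes_\Lambda T$ is a projective $\Lambda$-module; hence $\Ext^j_\Lambda(T, \mathrm{T}_M\mathrm{T}_N T)\cong \Ext^j_\Lambda(T,T)\oplus \Ext^j_\Lambda(T, X\otimes_\Lambda T)$, and both summands vanish for $j\ge1$ — the first because $T$ is tilting, the second because the second argument is projective.

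For condition (3), I need a proper $m$-$\add(N\otimes_\Lambda T)$-coresolution of $_\Gamma\Gamma$. Start from the $m$-$\add T$-coresolution $0\to{}_\Lambda\Lambda\to T_0\to\cdots\to T_m\to0$ of $_\Lambda\Lambda$ (which is automatically $\Hom_\Lambda(-,T)$-exact, i.e.\ proper, by the usual $\Ext$-vanishing argument from conditions (1)--(2)); apply $\mathrm{T}_N$ to obtain an exact sequence $0\to N\otimes_\Lambda\Lambda\to \mathrm{T}_N T_0\to\cdots\to\mathrm{T}_N T_m\to0$ with all terms in $\add(N\otimes_\Lambda T)$ and with $N\otimes_\Lambda\Lambda\cong N$ as a $\Gamma$-module. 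This is a proper $m$-$\add(N\otimes_\Lambda T)$-coresolution of $N$: properness is preserved because applying $\Hom_\Gamma(-,\mathrm{T}_N T)\cong\Hom_\Lambda(\mathrm{T}_M-,T)$ (adjunction again) to the $\mathrm{T}_N$-image is the same as applying $\Hom_\Lambda(-,T)$ to the $\mathrm{T}_M\mathrm{T}_N$-image of the original sequence, and $\mathrm{T}_M\mathrm{T}_N$ of a $\Hom_\Lambda(-,T)$-exact sequence stays $\Hom_\Lambda(-,T)$-exact since $\mathrm{T}_M\mathrm{T}_N\cong\mathrm{Id}\oplus(X\otimes_\Lambda-)$ with projective summand. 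It remains to pass from $N$ to $_\Gamma\Gamma$: by Lemma 2.4(3) we have $_\Gamma\Gamma\,|\,{}_\Gamma(N\otimes_\Lambda M)=\mathrm{T}_N(M\otimes_\Gamma\Gamma)=\mathrm{T}_N({}_\Lambda M)$, so it suffices to produce a proper $m$-$\add(N\otimes_\Lambda T)$-coresolution of $\mathrm{T}_N({}_\Lambda M)$, and since $_\Lambda M$ is a projective $\Lambda$-module it lies in $\add{}_\Lambda\Lambda$, so Lemma 2.7 (applied with $X={}_\Lambda\Lambda$, $X_1={}_\Lambda M$, $T$ the tilting module) upgrades the coresolution of $_\Lambda\Lambda$ to one of $_\Lambda M$, whose $\mathrm{T}_N$-image (properness preserved as above) covers $_\Gamma\Gamma$ as a direct summand; Lemma 2.7 once more (now over $\Gamma$, with the tilting module $N\otimes_\Lambda T$, established in parts (1)--(2)) descends to the summand $_\Gamma\Gamma$. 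This establishes (1). Part (2) of the lemma, $M\otimes_\Gamma N\otimes_\Lambda T\in\add_\Lambda T$, is then immediate from the isomorphism $M\otimes_\Gamma N\otimes_\Lambda T\cong T\oplus(X\otimes_\Lambda T)$ together with the fact that $X\otimes_\Lambda T$ is projective and $_\Lambda\Lambda$ (hence every projective) lies in $\add_\Lambda T$ by condition (3) — one reads off $X\otimes_\Lambda T\in\add{}_\Lambda\Lambda\subseteq\add_\Lambda T$.

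The main obstacle I anticipate is condition (3): keeping the \emph{properness} ($\Hom$-exactness) bookkeeping straight through two applications of $\mathrm{T}_N$/adjunction and two applications of the summand lemma (Lemma 2.7), and making sure that the reduction from $_\Gamma\Gamma$ to a $\mathrm{T}_N$-image is set up so that Lemma 2.7 applies cleanly on both the $\Lambda$-side (lifting $_\Lambda\Lambda\rightsquigarrow{}_\Lambda M$) and the $\Gamma$-side (descending $\mathrm{T}_N({}_\Lambda M)\rightsquigarrow{}_\Gamma\Gamma$). Conditions (1) and (2), by contrast, are routine adjunction-plus-exactness computations.
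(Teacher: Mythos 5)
Your verification of conditions (1) and (2) in the definition of a tilting module is essentially the paper's argument, differing only in which adjunction you invoke: the paper uses $(\mathrm{T}_M,\mathrm{T}_N)$ to get $\Ext^i_\Gamma(\mathrm{T}_N T,\mathrm{T}_N T)\cong\Ext^i_\Lambda(\mathrm{T}_M\mathrm{T}_N T,T)$, while you use $(\mathrm{T}_N,\mathrm{T}_M)$; both work. For condition (3) your route is correct but longer than necessary: the paper observes that $_\Gamma\Gamma$ is already a direct summand of $_\Gamma N$, because $_\Gamma N$ is a projective generator by Lemma 2.4(1), so one applies $\mathrm{T}_N$ to the $\add T$-coresolution of $_\Lambda\Lambda$, checks properness of the image from the already-established $\Ext$-vanishing, and invokes the summand lemma (Lemma 2.6) once. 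You instead detour through $_\Gamma\Gamma\mid N\otimes_\Lambda M$, which needs a lift $_\Lambda\Lambda\rightsquigarrow{}_\Lambda M$ on the $\Lambda$-side, an application of $\mathrm{T}_N$, and a second application of the summand lemma on the $\Gamma$-side, plus an extra properness-transfer check. This is a valid route, just more work.

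The serious problem is your proof of part (2). You assert that ``$_\Lambda\Lambda$ (hence every projective) lies in $\add_\Lambda T$ by condition (3),'' and conclude $X\otimes_\Lambda T\in\add{}_\Lambda\Lambda\subseteq\add_\Lambda T$. That inclusion is false in general: condition (3) provides an exact $\add T$-coresolution $0\to{}_\Lambda\Lambda\to T_0\to\cdots\to T_m\to 0$, which is not a split embedding $\Lambda\hookrightarrow T^n$; for $m\geq 1$ a tilting module is typically not a generator, so $\add{}_\Lambda\Lambda\not\subseteq\add_\Lambda T$. Thus your argument leaves $X\otimes_\Lambda T\in\add T$ unestablished. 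The paper closes this exactly where your condition (2) computation already left off: from $\Ext^1_\Lambda(T,X\otimes_\Lambda T)=0$ it invokes \cite[Lemma 3.3]{W} to conclude $X\otimes_\Lambda T\in\gen T$, and then, $X\otimes_\Lambda T$ being projective, the surjection from a finite sum of copies of $T$ splits, yielding $X\otimes_\Lambda T\in\add_\Lambda T$. Some step of this kind is needed; it is missing from your write-up.

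A smaller slip: you justify the projectivity of $X\otimes_\Lambda T$ over $\Lambda$ by citing that $_\Lambda X$ is projective as a left module. This hypothesis is not what is needed: a split left-$\Lambda$-linear inclusion $X\hookrightarrow\Lambda^n$ cannot be tensored on the right over $\Lambda$ with $T$. The correct input, supplied by the definition of stable equivalence of adjoint type (Remark 2.2(2)), is that $X$ is projective as a $\Lambda$-$\Lambda$-bimodule; a bimodule split embedding of $X$ into a finite sum of copies of $\Lambda\otimes_R\Lambda$ gives, after $-\otimes_\Lambda T$, a split left-$\Lambda$-linear embedding of $X\otimes_\Lambda T$ into a finite sum of copies of $\Lambda\otimes_R T$, which is projective over $\Lambda$.
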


\begin{proof} By assumption, there exist projective bimodules $_\Lambda P_{\Lambda}$ and $_{\Gamma}Q_{\Gamma}$, and bimodule isomorphisms
$_\Lambda M\otimes_{\Gamma}N_{\Lambda}\cong \Lambda\oplus P$ and $_\Gamma N\otimes_{\Lambda}M_{\Gamma}\cong \Gamma\oplus Q$
such that $(N\otimes_{\Lambda}-, M\otimes_{\Gamma}-)$ and $(M\otimes_{\Gamma}-, N\otimes_{\Lambda}-)$ are adjoint pairs.

(1) Our proof is based on the definition of a $m$-tilting module.

Assume that $T$ is a $m$-tilting $\Lambda$-module. Note that the exact functor $N\otimes_{\Lambda}-$
takes projective $\Lambda$-modules to projective $\Gamma$-modules by Lemma 2.4(2).
One has $\pd_{\Gamma}(N\otimes_{\Lambda}T)\leq \pd_{\Lambda}T=m$.
On the other hand, since $(M\otimes_{\Gamma}-, N\otimes_{\Lambda}-)$ is an adjoint pair and $M_{\Gamma}$ is projective,
then for any $i\geq 1$,
$\Ext_{\Gamma}^i(N\otimes_{\Lambda}T, N\otimes_{\Lambda}T)\cong \Ext_{\Lambda}^i(M\otimes_{\Gamma}N\otimes_{\Lambda}T, T)
\cong\Ext_{\Lambda}^i((\Lambda\oplus P)\otimes_{\Lambda}T, T)
\cong \Ext_{\Lambda}^i(T\oplus(P\otimes_{\Lambda}T), T)\cong \Ext_{\Lambda}^i(T, T)\oplus \Ext_{\Lambda}^i(P\otimes_{\Lambda}T, T)=0$
from the fact that $ P\otimes_{\Lambda}T$ is a projective $\Lambda$-module.

Since $T$ is a $m$-tilting $\Lambda$-module, there exists a $m$-$\add T $-coresolution:
$$0\to \Lambda \to T_0 \to T_1 \to \cdots \to T_m \to 0.$$
Applying the functor $N\otimes_{\Lambda}-$ to the sequence above gives rise to an exact sequence in $\mod\Gamma$:
$$0\to N \to N\otimes_{\Lambda}T_0 \to N\otimes_{\Lambda}T_1 \to \cdots \to N\otimes_{\Lambda}T_m \to 0 $$
which  is a  proper $m$-$\add (N\otimes_{\Lambda}T)$-coresolution, for the sake of $\Ext_{\Gamma}^{\geq 1}(N\otimes_{\Lambda}T, N\otimes_{\Lambda}T)=0.$
Note that $_{\Gamma}\Gamma |_{\Gamma}N$ by Lemma 2.4(1).  It follows from Lemma 2.6 that there is a proper $m$-$\add(N\otimes_{\Lambda}T)$-coresolution of $_\Gamma\Gamma:$
$0\to \Gamma \to T'_0\to T'_1\to \cdots T'_m\to 0.$
Thus we have proved that $N\otimes_{\Lambda}T$ is a $m$-tilting $\Lambda$-module by the discussion above.

(2) By assumption, there is a $\Lambda$-module isomorphism $M\otimes_{\Gamma}N\otimes_{\Lambda}T \cong T\oplus (P\otimes_{\Lambda}T)$.
To complete our proof, we need to prove $P\otimes_{\Lambda}T\in \add T$. In fact,
since $N\otimes_{\Lambda}T$ is a $m$-tilting module, and $(N\otimes_{\Lambda}-, M\otimes_{\Gamma}-)$ is an adjoint pair, there exist isomorphisms $0=\Ext_{\Gamma}^1(N\otimes_{\Lambda}T, N\otimes_{\Lambda}T)
\cong \Ext_{\Lambda}^1(T, M\otimes_{\Gamma}N\otimes_{\Lambda}T)\cong  \Ext_{\Lambda}^1(T, T\oplus P\otimes_{\Lambda}T)
\cong  \Ext_{\Lambda}^1(T, T)\oplus \Ext_{\Lambda}^1(T, P\otimes_{\Lambda}T)$, which means $\Ext_{\Lambda}^1(T, P\otimes_{\Lambda}T)=0.$
According to \cite[Lemma 3.3]{W}, it follows that $P\otimes_{\Lambda}T \in \gen T$, and hence $P\otimes_{\Lambda}T\in \add T$ by the projectivity of a $\Lambda$-module $P\otimes_{\Lambda}T$, as desired.
\end{proof}

\begin{theorem}
Let $\Lambda$ and $\Gamma$ be stably equivalent of adjoint type  defined by $_{\Lambda}M_{\Gamma}$ and $_{\Gamma}N_{\Lambda}$.
If $T$ is a $m$-tilting $\Lambda$-module, then there exists a symmetrical separable equivalence between  $\End_{\Lambda}T$ and $\End_{\Gamma}(N\otimes_{\Lambda}T)$.
\end{theorem}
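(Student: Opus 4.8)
The plan is to obtain Theorem 3.7 as a direct application of Theorem 3.1, so the work is essentially to certify that the hypotheses of Theorem 3.1 are met for the appropriate choice of modules. First I would observe that a stable equivalence of adjoint type is in particular a symmetrical separable equivalence (this is exactly the situation of Remark 2.2(2), where the bimodules $X$ and $Y$ are projective): the bimodule isomorphisms $_{\Lambda}M\otimes_{\Gamma}N_{\Lambda}\cong\Lambda\oplus P$ and $_{\Gamma}N\otimes_{\Lambda}M_{\Gamma}\cong\Gamma\oplus Q$ together with the two adjoint pairs $(N\otimes_{\Lambda}-,M\otimes_{\Gamma}-)$ and $(M\otimes_{\Gamma}-,N\otimes_{\Lambda}-)$ are precisely Definition 2.1 with $X=P$ and $Y=Q$. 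Hence Theorem 3.1 is available to us with the roles of $M$, $N$, $X$, $Y$ played by $M$, $N$, $P$, $Q$.

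Next I would set $U:=T$ (a $\Lambda$-module) and $V:=N\otimes_{\Lambda}T$ (a $\Gamma$-module), and check the two containment conditions required by Theorem 3.1. The first, $N\otimes_{\Lambda}U\in\add_{\Gamma}V$, is immediate since $N\otimes_{\Lambda}U=N\otimes_{\Lambda}T=V$. For the second, $M\otimes_{\Gamma}V\in\add_{\Lambda}U$, note that $M\otimes_{\Gamma}V=M\otimes_{\Gamma}N\otimes_{\Lambda}T$, and this module lies in $\add_{\Lambda}T=\add_{\Lambda}U$ by Lemma 3.6(2). (Implicitly one also uses that $V=N\otimes_{\Lambda}T$ is a finitely generated $\Gamma$-module, and is in fact an $m$-tilting $\Gamma$-module by Lemma 3.6(1), though only finite generation is needed to invoke Theorem 3.1.)

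With both conditions verified, Theorem 3.1 yields a symmetrical separable equivalence between the endomorphism algebras $\End_{\Lambda}U=\End_{\Lambda}T$ and $\End_{\Gamma}V=\End_{\Gamma}(N\otimes_{\Lambda}T)$, which is precisely the assertion. I do not expect any genuine obstacle in the argument for Theorem 3.7 itself: all the substantive content has been pushed into Lemma 3.6, and in particular the only nontrivial point is the containment $M\otimes_{\Gamma}N\otimes_{\Lambda}T\in\add_{\Lambda}T$, i.e. the fact from Lemma 3.6(2) that the projective bimodule summand $P$ satisfies $P\otimes_{\Lambda}T\in\add_{\Lambda}T$ (established there via $\Ext_{\Lambda}^{1}(T,P\otimes_{\Lambda}T)=0$, \cite[Lemma 3.3]{W}, and projectivity of $P\otimes_{\Lambda}T$). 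Given that lemma, the proof of Theorem 3.7 is a one-line invocation of Theorem 3.1.
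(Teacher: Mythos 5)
Your proposal is correct and follows the same route as the paper: invoke Lemma 3.6(2) to get $M\otimes_{\Gamma}N\otimes_{\Lambda}T\in\add_{\Lambda}T$ and then apply Theorem 3.1 with $U=T$, $V=N\otimes_{\Lambda}T$. You spell out the (trivial) containment $N\otimes_{\Lambda}U\in\add_{\Gamma}V$ and the reduction from stable equivalence of adjoint type to symmetrical separable equivalence a bit more explicitly than the paper does, but the argument is the same.
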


\begin{proof}  Since  $T$ is  a $m$-tilting $\Lambda$-module, one has  $M\otimes_{\Gamma}N\otimes_{\Lambda}T\in \add_{\Lambda}T$ by Lemma 3.6.  The assertion follows from Theorem 3.1.

\end{proof}
As a consequence of Theorem 3.7, one  has the following result.

\begin{corollary}
Let $\Lambda$ and $\Gamma$ be stably equivalent of adjoint type.
Then  $\Lambda$ is a tilted algebra if and only if so is $\Gamma.$
\end{corollary}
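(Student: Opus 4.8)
The plan is to prove both implications at once by exploiting the symmetry of the hypothesis, so it suffices to show one direction: if $\Lambda$ is a tilted algebra, then so is $\Gamma$. Recall that $\Lambda$ being a tilted algebra means there is a hereditary Artin algebra $H$ and a $1$-tilting $H$-module $T$ with $\Lambda \cong \End_H T$. The key observation is that the stable equivalence of adjoint type between $\Lambda$ and $\Gamma$ should be transported back along the tilting construction to a stable equivalence of adjoint type between $H$ and some algebra $H'$, and then one checks that $H'$ is again hereditary and that $\Gamma$ is the endomorphism algebra of a $1$-tilting $H'$-module.

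First I would reformulate the statement in terms that Theorem 3.7 can address. Since $\Lambda = \End_H T$ with $T$ a $1$-tilting $H$-module, the natural thing is to run the machinery in the \emph{opposite} direction: I want an algebra $H'$ stably equivalent of adjoint type to $H$, together with a $1$-tilting $H'$-module whose endomorphism algebra is $\Gamma$. So the first step is to establish that the tilting relation interacts well with stable equivalences of adjoint type in the sense that given such a stable equivalence between $H$ and $H'$ (defined by bimodules $_{H}M'_{H'}$ and $_{H'}N'_{H}$) and a $1$-tilting $H$-module $T$, the module $N'\otimes_H T$ is a $1$-tilting $H'$-module and $\End_H T$ is symmetrically separably equivalent to $\End_{H'}(N'\otimes_H T)$ — but this is exactly the content of Lemma 3.6 and Theorem 3.7 applied with $m=1$. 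The real work is therefore to produce $H'$: I need to show that if $\Gamma$ is stably equivalent of adjoint type to a tilted algebra $\End_H T$, then $\Gamma$ itself arises as $\End_{H'}(N'\otimes_H T)$ for a hereditary $H'$ stably equivalent of adjoint type to $H$.

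The main obstacle is precisely this lifting: passing a stable equivalence of adjoint type from the level of the tilted algebras $\Lambda, \Gamma$ down to the level of the hereditary algebras $H, H'$. I expect to handle it by noting that a stable equivalence of adjoint type is a very strong condition — the bimodules $X, Y$ in Definition 2.1 are projective — and in particular (by Lemma 2.4 and the Remark) it preserves global dimension and, more to the point, it restricts to an equivalence of the tilting/cotilting combinatorics. Concretely, the stable equivalence of adjoint type $\Lambda \sim \Gamma$ gives, via the derived-category interpretation of tilting (a tilting module $T$ over $H$ yields a derived equivalence $D^b(H)\simeq D^b(\Lambda)$), a correspondence at the level of $H$: one chooses $H'$ to be the endomorphism algebra of the tilting complex over $\Gamma$ matching $T$, and then checks $H'$ is hereditary because global dimension is an invariant and because being hereditary (global dimension $\le 1$) together with the tilted structure descends. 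Then $\Gamma = \End_{H'}(\text{a }1\text{-tilting }H'\text{-module})$ by construction, so $\Gamma$ is tilted.

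Finally I would assemble the pieces: having produced a hereditary $H'$ and a $1$-tilting $H'$-module $T'$ with $\End_{H'}T' \cong \Gamma$, the conclusion that $\Gamma$ is a tilted algebra is immediate from the definition. By symmetry of the hypothesis (exchange the roles of $\Lambda$ and $\Gamma$, and of the defining bimodules $M$ and $N$) the reverse implication follows verbatim, completing the proof. The one point requiring care in writing up is to make the identification $\Gamma \cong \End_{H'}(N'\otimes_H T)$ rather than merely $\Gamma$ being symmetrically separably equivalent to it — for this I would invoke that a stable equivalence of adjoint type between tilted algebras is realized at the hereditary level, so that the symmetrical separable equivalence produced by Theorem 3.7 is in fact the identity (i.e. $H'$ is chosen so that the equivalence collapses), which is what forces $\Gamma$ genuinely to be tilted rather than just "tilted up to symmetrical separable equivalence."
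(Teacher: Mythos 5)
Your proposal has a genuine gap, and the gap stems from reading the definition of ``tilted algebra'' in only one direction. You start from $\Lambda\cong\End_H T$ with $H$ hereditary and $T$ a $1$-tilting $H$-module, and then try to ``lift'' the stable equivalence of adjoint type from the level of $\Lambda$, $\Gamma$ down to a stable equivalence of adjoint type between $H$ and some new hereditary algebra $H'$, and moreover to force $\Gamma=\End_{H'}(N'\otimes_H T)$ on the nose rather than merely up to symmetrical separable equivalence. No such lifting is available: a stable equivalence of adjoint type between $\Lambda$ and $\Gamma$ gives you nothing a priori about $H$, which is not even mentioned by the hypotheses. Your sketch appeals to derived equivalences and ``matching tilting complexes over $\Gamma$,'' but a stable equivalence of adjoint type is not a derived equivalence, so there is no tilting complex to transport; and the final claim that ``the symmetrical separable equivalence produced by Theorem 3.7 is in fact the identity'' is not justified and in general false. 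If you could only conclude that $\Gamma$ is symmetrically separably equivalent to a tilted algebra, you would not be done, because you have not shown that being tilted is preserved by symmetrical separable equivalences in general.

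The step you missed is the standard reformulation the paper records just before Lemma 3.6: an Artin algebra $\Lambda$ is tilted if and only if there exists a $1$-tilting $\Lambda$-module $T$ such that $\End_\Lambda T$ is hereditary. (This comes from Brenner--Butler duality: if $\Lambda=\End_H T$ with $H$ hereditary and $T$ a $1$-tilting $H$-module, then there is a $1$-tilting $\Lambda$-module whose endomorphism algebra is Morita equivalent to $H$.) With this reformulation, the tilted algebra $\Lambda$ itself plays the role of ``$\Lambda$'' in Theorem 3.7, and there is nothing to lift. Concretely: take a $1$-tilting $\Lambda$-module $T$ with $\End_\Lambda T$ hereditary; Lemma 3.6 shows $N\otimes_\Lambda T$ is a $1$-tilting $\Gamma$-module; Theorem 3.7 gives a symmetrical separable equivalence between $\End_\Lambda T$ and $\End_\Gamma(N\otimes_\Lambda T)$; Lemma 2.4(4) then forces $\gd\End_\Gamma(N\otimes_\Lambda T)=\gd\End_\Lambda T\leq 1$, so $\End_\Gamma(N\otimes_\Lambda T)$ is hereditary and $\Gamma$ is tilted. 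This avoids the non-rigorous ``descent to the hereditary level'' entirely.
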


\begin{proof}
Assume that $\Lambda$ is a tilted algebra. Then there is a 1-tilting $\Lambda$-module $T$ such that $\End_{\Lambda}T$  is a hereditary algebra.
Due to Lemma 3.6, it follows that $N\otimes_{\Lambda}T$  is a 1-tilting $\Gamma$-module.
Because $\End_{\Lambda}T$ and $\End_{\Gamma}(N\otimes_{\Lambda}T)$ are symmetrically separably equivalent by Theorem 3.7,
one gets  $\gd \End_{\Gamma}(N\otimes_{\Lambda}T)=\gd\End_{\Lambda}T \leq 1$ by Lemma 2.4(4),
which implies that $\Gamma$ is a tilted algebra.
\end{proof}

As another application of Theorem 3.1, we obtain the following result, which improves the result of \cite[Theorem 1.1]{L2} a bit.

\begin{corollary}
Let $\Lambda$ and $\Gamma$  be symmetric $k$-algebras and let $M$ be a $(\Gamma, \Lambda)$-bimodule inducing a separable equivalence between
$\Lambda$ and $\Gamma$. Suppose that $U$ is a $\Lambda$-module and $V$ a $\Gamma$-module such that $M^{\ast}\otimes_{\Lambda}U \in \add_{\Gamma}V$
and $M\otimes_{\Gamma}V \in \add_{\Lambda}U$. Then the dominant dimension of $\End_{\Lambda}U$  and of $\End_{\Gamma}V$ are equal.
\end{corollary}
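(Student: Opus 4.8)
The plan is to observe that the hypothesis already produces, after a purely formal upgrade, a symmetrical separable equivalence to which Theorem 3.1 applies, and then to quote the known invariance of the dominant dimension. The only delicate point is the first step: the data in the statement is a one-sided separable equivalence given by the single bimodule $M$, and I must turn it into an honest symmetrical separable equivalence in the sense of Definition 2.1.

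First I would set $N := M^{\ast} = \Hom_{k}(M,k)$, a bimodule on the side opposite to $M$, so that the functors $\mathrm{T}_{M} = M\otimes_{\Gamma}-$ and $\mathrm{T}_{N} = M^{\ast}\otimes_{\Lambda}-$ are defined between $\Mod\Gamma$ and $\Mod\Lambda$ in the directions appearing in the statement. Since $M$ induces a separable equivalence, $M$ is finitely generated projective as a one-sided module on each side, hence so is $M^{\ast}$, and one has the defining isomorphisms ${}_{\Lambda}M\otimes_{\Gamma}M^{\ast}{}_{\Lambda}\cong {}_{\Lambda}\Lambda_{\Lambda}\oplus X$ and ${}_{\Gamma}M^{\ast}\otimes_{\Lambda}M_{\Gamma}\cong {}_{\Gamma}\Gamma_{\Gamma}\oplus Y$. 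It remains to check the two adjunctions. Because $M$ is finitely generated projective as a left $\Lambda$-module, tensor–hom adjunction gives $\Hom_{\Lambda}(M,-)\cong \Hom_{\Lambda}(M,\Lambda)\otimes_{\Lambda}-$; and since $\Lambda$ is symmetric, $\Lambda\cong \Hom_{k}(\Lambda,k)$ as $\Lambda$-bimodules, whence $\Hom_{\Lambda}(M,\Lambda)\cong \Hom_{\Lambda}(M,\Hom_{k}(\Lambda,k))\cong \Hom_{k}(M,k)=M^{\ast}$ as bimodules. Thus $M^{\ast}\otimes_{\Lambda}-$ is the right adjoint of $M\otimes_{\Gamma}-$. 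Running the same argument over the symmetric algebra $\Gamma$ with the bimodule $M^{\ast}$ (finitely generated projective as a left $\Gamma$-module) identifies $\Hom_{\Gamma}(M^{\ast},-)\cong M\otimes_{\Gamma}-$, so that $M\otimes_{\Gamma}-$ is also the right adjoint of $M^{\ast}\otimes_{\Lambda}-$. Hence both $(M\otimes_{\Gamma}-,\,M^{\ast}\otimes_{\Lambda}-)$ and $(M^{\ast}\otimes_{\Lambda}-,\,M\otimes_{\Gamma}-)$ are adjoint pairs, and $\Lambda$ and $\Gamma$ are symmetrically separably equivalent via ${}_{\Lambda}M_{\Gamma}$ and ${}_{\Gamma}M^{\ast}{}_{\Lambda}$.

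Next, the hypotheses $M^{\ast}\otimes_{\Lambda}U = \mathrm{T}_{N}(U)\in\add_{\Gamma}V$ and $M\otimes_{\Gamma}V = \mathrm{T}_{M}(V)\in\add_{\Lambda}U$ are precisely the conditions of Theorem 3.1 for this symmetrical separable equivalence. Applying Theorem 3.1 yields a symmetrical separable equivalence between the Artin algebras $\End_{\Lambda}U$ and $\End_{\Gamma}V$. Finally I would invoke the fact recalled in the Introduction (see \cite{B,K1,L2}) that symmetrically separably equivalent Artin algebras have equal dominant dimension; applied to $\End_{\Lambda}U$ and $\End_{\Gamma}V$ this gives $\mathrm{dom.dim}\,\End_{\Lambda}U = \mathrm{dom.dim}\,\End_{\Gamma}V$, as claimed. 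I expect the only real obstacle to be the bimodule bookkeeping of the first step — verifying that one and the same bimodule $M^{\ast}$ simultaneously implements the right adjoint of $\mathrm{T}_{M}$ and of $\mathrm{T}_{N}$ and that all the isomorphisms respect the bimodule structures — after which the corollary is a direct consequence of Theorem 3.1 and a cited invariance result; this is also where the improvement over \cite[Theorem 1.1]{L2} comes from, since over symmetric algebras a separable equivalence is automatically symmetrical.
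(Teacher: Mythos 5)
Your proof is correct and follows the same three-step structure as the paper's: upgrade the separable equivalence given by $M$ to a symmetrical one via the dual bimodule, apply Theorem 3.1 to $U$ and $V$, and invoke the invariance of dominant dimension under symmetrical separable equivalence. The only difference is that the paper simply cites Kadison \cite{K2} for the upgrade step, whereas you spell that argument out directly using the bimodule identification $\Hom_{\Lambda}(M,\Lambda)\cong\Hom_{k}(M,k)$ over a symmetric algebra --- a correct unpacking of the cited result, not a different route.
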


\begin{proof}
According to \cite{K2}, one has that $\Lambda$ and $\Gamma$ are symmetrically separably equivalent induced by
$M$ and its dual module $M^{\ast}=\Hom_{\Lambda}(M, \Lambda)$.
By Theorem 3.1, there is a symmetrical separable equivalence between $\End_{\Lambda}U$ and $\End_{\Gamma}V$.
The assertion follows from the fact that symmetrically separably equivalent algebras have the same dominant dimensions. 
\end{proof}

Let $\Lambda$ be an Artin algebra. Recall from \cite{A} that $V\in\mod \Lambda$ is said to be a {\it Auslander generator}, if $V$ is a generator-cogenerator for $\Lambda$-modules. Recall from \cite{CFK} that  {\it  the rigidity dimension of $\Lambda$}, denoted by rig.dim$\Lambda$, is defined to be
$$\mathrm{rig. dim}(\Lambda)=\sup\{\mathrm{dom. dim}(\End_{\Lambda}(V))| V  \text {is an Auslander generator  in} \mod\Lambda \}.$$
and {\it the representation dimension} of $\Lambda$, denoted by rep.dim$\Lambda$,  is defined to be
$$\mathrm{rep.dim} (\Lambda)=\inf\{\gd(\End_{\Lambda}(V))| V  \text{is an Auslander generator in  } \mod\Lambda \}$$

The notion of rigidity dimension is introduced by Chen {\it et al.} \cite{CFK} to measure homologically the quality of the best resolutions of Artin algebras, and it was proved that the rigidity dimension is an invariant under stable equivalences of adjoint type.
The notion of representation dimension was introduced by Auslander in \cite{A} to measure how far away an Artin algebra is from being representation-finite type in a homological way.

In the following, we shall discuss the invariant properties of the  rigidity dimension and the representation dimension of Artin algebras under symmetrical separable equivalences, and obtain a result about the representation dimension of Xi.
The following lemma is needed.

\begin{lemma}
Let $\Lambda$ and $\Gamma$ be separably equivalent induced by bimodules $_{\Lambda}M_{\Gamma}$ and $_{\Gamma}N_{\Lambda}$, and
let $V$ be a $\Lambda$-module. Suppose that $V$ is an Auslander generator for $\Lambda$-modules, so is  $N\otimes_{\Lambda}V$ as a $\Gamma$-module.
\end{lemma}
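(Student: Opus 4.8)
The plan is to prove that $W:=N\otimes_{\Lambda}V$ is a generator-cogenerator of $\mod\Gamma$, which --- by the definition recalled just above the lemma --- is precisely the statement that $W$ is an Auslander generator for $\Gamma$; I will then indicate how, under the sharper convention in which an Auslander generator is also required to realize the representation dimension of its ring, the equality $\gd\End_{\Gamma}W=\mathrm{rep.dim}\,\Gamma$ follows as well. I will use throughout that $\mathrm{T}_N=N\otimes_{\Lambda}-$ and $\mathrm{T}_M=M\otimes_{\Gamma}-$ are additive and exact and that $\mathrm{T}_M$ carries injective modules to injective modules (a standard property of a bimodule finitely generated projective on both sides; in the symmetric case it is immediate, since $\mathrm{T}_M$ is then the right adjoint of the exact functor $\mathrm{T}_N$), and, above all, the bimodule isomorphism ${}_{\Gamma}N\otimes_{\Lambda}M_{\Gamma}\cong{}_{\Gamma}\Gamma_{\Gamma}\oplus{}_{\Gamma}Y_{\Gamma}$: tensoring it on the right by an arbitrary left $\Gamma$-module $L$ shows that $L\cong\Gamma\otimes_{\Gamma}L$ is a direct summand of $(N\otimes_{\Lambda}M)\otimes_{\Gamma}L\cong N\otimes_{\Lambda}(M\otimes_{\Gamma}L)$, so every $\Gamma$-module is a direct summand of $\mathrm{T}_N\mathrm{T}_M$ applied to it.

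I would first check that $W$ is a generator. Since $V$ is a generator, $\add_{\Lambda}\Lambda\subseteq\add_{\Lambda}V$, and as $M\cong M\otimes_{\Gamma}\Gamma$ is finitely generated projective as a left $\Lambda$-module it lies in $\add_{\Lambda}\Lambda$, hence in $\add_{\Lambda}V$; therefore $N\otimes_{\Lambda}M\in\add_{\Gamma}(N\otimes_{\Lambda}V)=\add_{\Gamma}W$, and as $\Gamma$ is a direct summand of ${}_{\Gamma}(N\otimes_{\Lambda}M)$ by the bimodule isomorphism, $\Gamma\in\add_{\Gamma}W$. Next, $W$ is a cogenerator: the injective cogenerator $\mathrm{D}(\Gamma_{\Gamma})$ of $\mod\Gamma$ being injective, $M\otimes_{\Gamma}\mathrm{D}(\Gamma_{\Gamma})$ is an injective $\Lambda$-module, whence $M\otimes_{\Gamma}\mathrm{D}(\Gamma_{\Gamma})\in\add_{\Lambda}\mathrm{D}(\Lambda_{\Lambda})\subseteq\add_{\Lambda}V$ (using that $V$ is a cogenerator, so $\mathrm{D}(\Lambda_{\Lambda})\in\add_{\Lambda}V$); applying $\mathrm{T}_N$ we get $N\otimes_{\Lambda}(M\otimes_{\Gamma}\mathrm{D}(\Gamma_{\Gamma}))\in\add_{\Gamma}W$, and since $\mathrm{D}(\Gamma_{\Gamma})$ is a direct summand of this module, $\mathrm{D}(\Gamma_{\Gamma})\in\add_{\Gamma}W$. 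Thus $W$ is a generator-cogenerator, which settles the lemma as stated.

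For the sharper convention one has in addition to verify $\gd\End_{\Gamma}W=\mathrm{rep.dim}\,\Gamma$. The inequality $\gd\End_{\Gamma}W\ge\mathrm{rep.dim}\,\Gamma$ is automatic, $W$ being a generator-cogenerator and $\mathrm{rep.dim}\,\Gamma$ an infimum over such modules. For the reverse inequality one uses that the representation dimension is shared by (symmetrically) separably equivalent algebras (recorded in the Introduction), so that $\mathrm{rep.dim}\,\Gamma=\mathrm{rep.dim}\,\Lambda=\gd\End_{\Lambda}V$, and it suffices to show $\gd\End_{\Gamma}W\le\gd\End_{\Lambda}V$. I would obtain this by transporting resolutions: for $L\in\mod\Gamma$, a $\Hom_{\Lambda}(V,-)$-exact $\add_{\Lambda}V$-resolution of $M\otimes_{\Gamma}L$ of length at most $\gd\End_{\Lambda}V-2$ (Auslander's characterization), pushed through the exact functor $\mathrm{T}_N$ (which maps $\add_{\Lambda}V$ into $\add_{\Gamma}W$) and then restricted to the direct summand $L$ of $N\otimes_{\Lambda}(M\otimes_{\Gamma}L)$, yields a short enough $\add_{\Gamma}W$-resolution of $L$; alternatively, whenever the hypotheses of Theorem 3.1 hold for $U=V$ one simply notes that $\End_{\Lambda}V$ and $\End_{\Gamma}W$ are then symmetrically separably equivalent and applies Lemma 2.4(4).

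The generator-cogenerator part is a direct summand chase and should go through cleanly once the (standard) fact that $\mathrm{T}_M$ preserves injectivity is in hand. The point that needs care, I expect, is the homological step: after pushing an $\add_{\Lambda}V$-resolution through $\mathrm{T}_N$ one must ensure it stays $\Hom_{\Gamma}(W,-)$-exact and that passing to the direct summand $L$ does not lengthen it --- equivalently, one must keep control of the correction bimodule $X$ (resp.\ $Y$) arising from $M\otimes_{\Gamma}N\cong\Lambda\oplus X$ (resp.\ $N\otimes_{\Lambda}M\cong\Gamma\oplus Y$), which is exactly the bookkeeping that drives the four-step argument in the proof of Theorem 3.1.
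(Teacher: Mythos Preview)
Your argument for the generator part is essentially the paper's. For the cogenerator part you take a genuinely different route: the paper shows, for an arbitrary $A\in\mod\Gamma$, that $M\otimes_{\Gamma}A$ embeds in some $V^{(n)}$ (since $V$ cogenerates), applies the exact functor $N\otimes_{\Lambda}-$, and then uses $A\,|\,N\otimes_{\Lambda}M\otimes_{\Gamma}A$ to obtain a monomorphism $A\hookrightarrow (N\otimes_{\Lambda}V)^{(n)}$. You instead show directly that $\mathrm{D}(\Gamma_{\Gamma})\in\add_{\Gamma}W$, which is neater but hinges on $\mathrm{T}_M$ preserving injectives. Under the section's standing hypothesis of \emph{symmetric} separable equivalence this is fine (indeed immediate, as $\mathrm{T}_M$ is right adjoint to the exact $\mathrm{T}_N$), so your proof goes through. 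The paper's argument, by contrast, uses only exactness of $\mathrm{T}_N$ and the splitting $\Gamma\,|\,N\otimes_{\Lambda}M$, hence already works at the level of plain separable equivalence, matching the lemma's wording.

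Two small caveats. First, your parenthetical that preservation of injectives is ``a standard property of a bimodule finitely generated projective on both sides'' is not evidently true without the adjunction: writing $M\otimes_{\Gamma}-\cong\Hom_{\Gamma}(M^{\vee},-)$ with $M^{\vee}=\Hom_{\Gamma}(M_{\Gamma},\Gamma)$, one would need $M^{\vee}$ to be flat as a right $\Lambda$-module, which does not follow from biprojectivity of $M$ alone. Since you only need the symmetric case, I would drop that claim. Second, the entire ``sharper convention'' paragraph is unnecessary here: the paper defines an Auslander generator simply as a generator--cogenerator (just above the lemma), so once you have established that $W$ is a generator--cogenerator the lemma is proved; the representation-dimension inequality is handled separately in Theorem~3.11.
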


\begin{proof}
Since $_{\Lambda}\Lambda \in \add_{\Lambda}V$  and $N$ is a projective generator for $\Gamma$-modules by assumption,
one has $\Gamma\in \add_{\Gamma}(N\otimes_{\Lambda}V)$. On the other hand, let $A$ be a $\Gamma$-module. Then $M\otimes_{\Gamma}A\in \mod\Lambda$, and so there is a $\Lambda$-module monomorphism $f: M\otimes_{\Gamma}A\to V^{(n)}$ for some positive integer $n$,
 because $V$ is a cogenerator for $\Lambda$-modules. Since the functor $N\otimes_{\Lambda}-$ is exact, one gets that  $N\otimes_{\Lambda}f: N\otimes_{\Lambda}M\otimes_{\Gamma}A\to N\otimes_{\Lambda}V^{(n)}$ is a $\Gamma$-module monomorphism. Notice that $_{\Gamma}A|_{\Gamma}(N\otimes_{\Lambda}M\otimes_{\Gamma}A)$ by Lemma 2.4(3), then there exists a
split monomorphism $i:A\to N\otimes_{\Lambda}M\otimes_{\Gamma}A$. So the composition $g=(N\otimes_{\Lambda}f)i: A\to (N\otimes_{\Lambda}V)^{(n)}$ is a $\Gamma$-module monomorphism, which means that $N\otimes_{\Lambda}V$ is a cogenerator for $\Gamma$-modules.
\end{proof}

For a ring $\Lambda$,  let $T$ be a finitely generated $\Lambda$-bimodule. According to Hirata \cite{H2},  $T$ is said to be {\it centrally projective} over $\Lambda$, if $_{\Lambda}T_{\Lambda}\in\add (_{\Lambda}\Lambda_{\Lambda})$.

\begin{theorem} Let $\Lambda$ and $\Gamma$ be symmetrically separably equivalent induced by $_{\Lambda}M_{\Gamma}$ and $_{\Gamma}N_{\Lambda}$.
Assume that $M\otimes_{\Gamma}N$ is centrally projective over $\Lambda$, then

$(\mathrm{1)}$  $\mathrm{rig.dim}(\Lambda)\leq\mathrm{rig.dim}(\Gamma);$

$\mathrm{(2)}$  $\mathrm{rep.dim}(\Lambda)\geq\mathrm{rep.dim}(\Gamma).$
\end{theorem}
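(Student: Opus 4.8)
The plan is to reduce everything to results already in hand: Lemma 4.10, Theorem 3.1, and the known invariance of the dominant dimension and of the global dimension under symmetrical separable equivalences. Fix an Auslander generator $V$ of $\mod\Lambda$. The first step is to observe that the centrally projective hypothesis is exactly what is needed to feed $V$ into Theorem 3.1: since ${}_{\Lambda}(M\otimes_{\Gamma}N)_{\Lambda}\in\add({}_{\Lambda}\Lambda_{\Lambda})$, the $\Lambda$-module $M\otimes_{\Gamma}(N\otimes_{\Lambda}V)\cong(M\otimes_{\Gamma}N)\otimes_{\Lambda}V$ is a direct summand of a finite direct sum of copies of $\Lambda\otimes_{\Lambda}V\cong V$, so $M\otimes_{\Gamma}(N\otimes_{\Lambda}V)\in\add_{\Lambda}V$; and of course $N\otimes_{\Lambda}V\in\add_{\Gamma}(N\otimes_{\Lambda}V)$. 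Hence Theorem 3.1, applied to the pair $V$ and $N\otimes_{\Lambda}V$, gives a symmetrical separable equivalence between $A:=\End_{\Lambda}V$ and $B:=\End_{\Gamma}(N\otimes_{\Lambda}V)$.

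The second step is to note, via Lemma 4.10, that $N\otimes_{\Lambda}V$ is again an Auslander generator, now of $\mod\Gamma$; consequently $B$ occurs among the endomorphism algebras over which the supremum/infimum defining $\mathrm{rig.dim}(\Gamma)$ and $\mathrm{rep.dim}(\Gamma)$ are taken. Since $A$ and $B$ are symmetrically separably equivalent, they share the same dominant dimension (as recalled in the introduction and used in the proof of Corollary 3.10) and the same global dimension (Lemma 2.4(4)); that is, $\mathrm{dom.dim}(A)=\mathrm{dom.dim}(B)$ and $\gd A=\gd B$.

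For (1): for every Auslander generator $V$ of $\Lambda$ we obtain $\mathrm{dom.dim}(\End_{\Lambda}V)=\mathrm{dom.dim}(\End_{\Gamma}(N\otimes_{\Lambda}V))\le\mathrm{rig.dim}(\Gamma)$, and taking the supremum over all such $V$ yields $\mathrm{rig.dim}(\Lambda)\le\mathrm{rig.dim}(\Gamma)$. For (2): for every Auslander generator $V$ of $\Lambda$ we obtain $\mathrm{rep.dim}(\Gamma)\le\gd\End_{\Gamma}(N\otimes_{\Lambda}V)=\gd\End_{\Lambda}V$, and taking the infimum over all such $V$ yields $\mathrm{rep.dim}(\Gamma)\le\mathrm{rep.dim}(\Lambda)$.

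The argument is mostly bookkeeping; the only delicate point is the verification that Theorem 3.1 applies, and that is precisely where central projectivity is indispensable, since in general one only has $(M\otimes_{\Gamma}N)\otimes_{\Lambda}V\cong V\oplus(X\otimes_{\Lambda}V)$ with no reason for $X\otimes_{\Lambda}V$ to lie in $\add_{\Lambda}V$. It is also worth remarking why the conclusions are inequalities rather than equalities: the assignment $V\mapsto N\otimes_{\Lambda}V$ carries Auslander generators of $\Lambda$ to Auslander generators of $\Gamma$, but not every Auslander generator of $\Gamma$ is of this form, and running the symmetric argument would additionally require $N\otimes_{\Lambda}M$ to be centrally projective over $\Gamma$, which is not assumed here.
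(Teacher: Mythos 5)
Your proposal is correct and follows essentially the same route as the paper: Lemma~3.10 (you wrote ``Lemma~4.10,'' a typo) carries Auslander generators across, central projectivity supplies the $\add_{\Lambda}V$-condition needed for Theorem~3.1, and preservation of dominant and global dimension under the resulting symmetrical separable equivalence gives the two inequalities. One small remark: the paper's proof works from the full definition of $\mathrm{rig.dim}$ (which restricts to Auslander generators whose endomorphism algebra has finite global dimension), whereas you use the abbreviated form stated in Section~3; your argument still goes through because $\gd(\End_\Lambda V)=\gd(\End_\Gamma(N\otimes_\Lambda V))$, so the side condition transfers along with the dominant dimension, but it is worth making that explicit.
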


\begin{proof}
(1)  If $\mathrm{rig.dim}(\Lambda)=\infty$, there is nothing to prove.

Assume that $\mathrm{rig.dim}(\Lambda)=m<\infty$, then there exists an Auslander generator  $V$  for $\Lambda$-modules  with gl.dim$(\End_{\Lambda}V)<\infty $
 such that $\mathrm{dom.dim}(\End_{\Lambda}V)=m$.  It follows from Lemma 3.10 that  $N\otimes_{\Lambda}V$ is an Auslander generator  for $\Gamma$-modules.

On the other hand, since $_{\Lambda}M\otimes_{\Gamma}N_{\Lambda}\in \add_{\Lambda}\Lambda_{\Lambda}$ by assumption, we have $_{\Lambda}M\otimes_{\Gamma}N\otimes_{\Lambda}V\in \add _{\Lambda}V$.
According to Theorem 3.1,  it follows that $\End_{\Lambda}V$ and $\End_{\Gamma}(N\otimes_{\Lambda}V)$ are symmetrically separably equivalent.
Hence by Lemma 2.4(4) one gets that gl.dim$\End_{\Gamma}(N\otimes_{\Lambda}V)$= gl.dim$\End_{\Lambda}V<\infty$
and $\mathrm{dom.dim}\End_{\Gamma}(N\otimes_{\Lambda}V)=\mathrm{dom.dim}\End_{\Lambda}V=m$. This infers  $\mathrm{rig.dim}(\Lambda)\leq \mathrm{rig.dim}(\Gamma).$

(2) Assume that rep.dim$\Lambda=m$. Then there exists an Auslander generator $V$ for $\Lambda$-modules such that gl.dim$\End_{\Lambda}V=m$.
Lemma 3.10 yields that $N\otimes_{\Lambda}V$ is an Auslander generator. Note that $_{\Gamma}M\otimes_{\Lambda}N_{\Gamma}$ is centrally projective by assumption, it follows from Theorem 3.1 that there  exists a symmetrical separable equivalence between $\End_{\Lambda}V$ and $\End_{\Gamma}(N\otimes_{\Lambda}V)$.
Hence gl.dim$\End_{\Gamma}(N\otimes_{\Lambda}V)=m$ by Lemma 2.4(4), which implies rep.dim$\Gamma\leq m$, as desired.
\end{proof}

\begin{corollary} Let $\Lambda$ and $\Gamma$ be symmetrically separably equivalent induced by bimodules
$_{\Gamma}M_{\Lambda}$ and $_{\Lambda}N_{\Gamma}$.
Suppose that $M\otimes_{\Gamma}N$ and $N\otimes_{\Lambda}M$ are centrally projective over $\Lambda$ and $\Gamma$, respectively, then we have

$\mathrm{(1)}$   $\mathrm{rig.dim}(\Lambda)=\mathrm{rig.dim}(\Gamma).$

 $\mathrm{(2)}$ $\mathrm{rep.dim}  (\Gamma) = \mathrm{rep.dim} (\Lambda).$
\end{corollary}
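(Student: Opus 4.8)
The plan is to deduce Corollary 4.0 (the final statement, i.e.\ the one about centrally projective bimodules on both sides) directly from Theorem 3.11. Theorem 3.11 gives one inequality from a one-sided centrality hypothesis; applying it twice, once with the roles of $\Lambda$ and $\Gamma$ as stated and once with the roles swapped, will sandwich the two dimensions and force equality. So the first thing I would do is record that the hypotheses are symmetric: $\Lambda$ and $\Gamma$ are symmetrically separably equivalent induced by $_{\Gamma}M_{\Lambda}$ and $_{\Lambda}N_{\Gamma}$, with $M\otimes_{\Gamma}N$ centrally projective over $\Lambda$ and $N\otimes_{\Lambda}M$ centrally projective over $\Gamma$; reading this from $\Gamma$'s point of view, the pair $(_{\Lambda}N_{\Gamma},\, _{\Gamma}M_{\Lambda})$ exhibits $\Gamma$ and $\Lambda$ as symmetrically separably equivalent with $N\otimes_{\Lambda}M$ centrally projective over $\Gamma$, which is exactly the shape of hypothesis needed to run Theorem 3.11 with $\Lambda$ and $\Gamma$ interchanged.

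Then the proof is two lines. For part (1): Theorem 3.11(1) applied as stated gives $\mathrm{rig.dim}(\Lambda)\le\mathrm{rig.dim}(\Gamma)$, and the same theorem applied with $\Lambda$ and $\Gamma$ swapped (legitimate since $N\otimes_{\Lambda}M$ is centrally projective over $\Gamma$) gives $\mathrm{rig.dim}(\Gamma)\le\mathrm{rig.dim}(\Lambda)$; combining yields $\mathrm{rig.dim}(\Lambda)=\mathrm{rig.dim}(\Gamma)$. For part (2): Theorem 3.11(2) gives $\mathrm{rep.dim}(\Lambda)\ge\mathrm{rep.dim}(\Gamma)$, and the swapped application gives $\mathrm{rep.dim}(\Gamma)\ge\mathrm{rep.dim}(\Lambda)$, hence equality. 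No new construction is required; the content is entirely in Theorem 3.11 and in observing that the present corollary's hypotheses are invariant under interchanging $\Lambda$ and $\Gamma$.

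The only point that needs a little care — and the one I would flag as the ``obstacle,'' though it is minor — is checking that the symmetrically-separably-equivalent data really is symmetric in the required sense: the definition in Section~2 fixes an ordered pair of bimodules and asks that both $(M\otimes_{\Gamma}-, N\otimes_{\Lambda}-)$ and $(N\otimes_{\Lambda}-, M\otimes_{\Gamma}-)$ be adjoint pairs, and one must confirm that relabelling does not disturb the bimodule isomorphisms $M\otimes_{\Gamma}N\cong \Lambda\oplus X$ and $N\otimes_{\Lambda}M\cong \Gamma\oplus Y$ nor the adjunctions. This is immediate from the definition since both adjoint pairs are assumed, so after making that observation explicit the corollary follows at once from Theorem 3.11, and I would present it as such.
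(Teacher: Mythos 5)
Your proposal is correct and is exactly the intended argument: the paper gives no separate proof for this corollary precisely because it follows from two applications of Theorem~3.11, once as stated and once with $\Lambda$ and $\Gamma$ interchanged (together with $M$ and $N$), and your observation that Definition~2.1 is symmetric under this relabelling — since it demands both adjunctions $(M\otimes_{\Gamma}-, N\otimes_{\Lambda}-)$ and $(N\otimes_{\Lambda}-, M\otimes_{\Gamma}-)$ and the two splittings $M\otimes_{\Gamma}N\cong\Lambda\oplus X$, $N\otimes_{\Lambda}M\cong\Gamma\oplus Y$ — is the one point that makes the double application legitimate. You correctly flagged and resolved the only delicate step; note also that the subscripts ``$_{\Gamma}M_{\Lambda}$ and $_{\Lambda}N_{\Gamma}$'' in the corollary's statement appear to be a typographical slip (the tensor products $M\otimes_{\Gamma}N$ over $\Lambda$ and $N\otimes_{\Lambda}M$ over $\Gamma$ only parse with $_{\Lambda}M_{\Gamma}$ and $_{\Gamma}N_{\Lambda}$ as in Theorem~3.11), and your reading is the consistent one.
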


There are examples of separable equivalences satisfies all conditions of Corollary 3.12.

(1) Let $\Lambda$ be a finite dimensional algebra over an algebraically closed field $k$ and $F$ is a separable field extension of $k$. Following \cite[Lemma 4.6]{SZ}, $\Lambda\otimes_{k}F$
and $\Lambda$ are symmetrically separably equivalent induced by $_{\Lambda\otimes_{k}F}(\Lambda\otimes_{k}F)_{\Lambda}$ and
$_{\Lambda}(\Lambda\otimes_{k}F)_{\Lambda\otimes_{k}F}$. Also  $_{\Lambda}(\Lambda\otimes_{k}F)\otimes_{\Lambda\otimes_{k}F}(\Lambda\otimes_{k}F)$
is centrally projective over $\Lambda$, and $(\Lambda\otimes_{k}F)\otimes_{\Lambda}(\Lambda\otimes_{k}F)$ is centrally projective over $\Lambda\otimes_{k}F$.

(2) Let $\Lambda$ be an Artin $R$-algebra, where $R$ is a commutative Artin ring. If $\Lambda$ is an excellent extension of $R$, it is not difficult to see that $\Lambda$ and $R$ are symmetrically separably equivalent induced by $_\Lambda\Lambda_R$ and $_R\Lambda_\Lambda$. Moreover, $\Lambda\otimes_R\Lambda$ and $\Lambda\otimes_{\Lambda}\Lambda$ are centrally projective over $\Lambda$ and $R$, respectively.

\section{Auslander-type condition and Homological conjectures}\label{sec4}

In this section, we shall prove that the Auslander-type condition and homological conjectures, including the Gorenstein symmetric conjecture, the (strong) Nakayama conjecture and the Auslander Gorenstein  conjecture, are preserved under symmetrical separable equivalences. We begin with the following observation.

\begin{lemma}
Let $\Lambda$ and $\Gamma$ be two rings and let  $I$ be  a $\Lambda$-module and $J$ a right $\Lambda$-module. Suppose that there is a
symmetrical separable equivalence between $\Lambda$ and $\Gamma$ defined by bimodule $_{\Lambda}M_{\Gamma}$ and $_\Gamma N_{\Lambda}$. Then

$\mathrm{(1)}$ $I$ is an injective $\Lambda$-module if and only if so is $N\otimes_{\Lambda}I$ as a $\Gamma$-module.

$\mathrm{(2)}$ $J$ is an injective right $\Lambda$-module if and only if so is $J\otimes_{\Lambda}M$ as a right  $\Gamma$-module.

$\mathrm{(3)}$ $F$ is a flat $\Lambda$-module if and only if so is $N\otimes_{\Lambda}F$ as a $\Gamma$-module.
\end{lemma}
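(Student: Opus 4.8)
The plan is to exploit the adjoint pairs $(\mathrm{T}_M,\mathrm{T}_N)$ and $(\mathrm{T}_N,\mathrm{T}_M)$ together with the fact that $\mathrm{T}_N$ and $\mathrm{T}_M$ are exact and send projectives to projectives (Lemma~2.4), plus the splitting isomorphisms $M\otimes_\Gamma N\cong \Lambda\oplus X$ and $N\otimes_\Lambda M\cong \Gamma\oplus Y$. For part (1), I would first prove the ``only if'' direction: if $I$ is an injective $\Lambda$-module, then $\Hom_\Gamma(-,N\otimes_\Lambda I)$ is naturally isomorphic, via the adjunction $(\mathrm{T}_M,\mathrm{T}_N)$, to $\Hom_\Lambda(M\otimes_\Gamma-,I)$, which is the composite of the exact functor $\mathrm{T}_M$ with the exact functor $\Hom_\Lambda(-,I)$; hence $\Hom_\Gamma(-,N\otimes_\Lambda I)$ is exact, so $N\otimes_\Lambda I$ is injective as a $\Gamma$-module.

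For the ``if'' direction of (1), suppose $N\otimes_\Lambda I$ is an injective $\Gamma$-module. Applying the exact, projective-preserving functor $\mathrm{T}_M$ and using the same adjunction argument on the other side, $M\otimes_\Gamma(N\otimes_\Lambda I)$ is an injective $\Lambda$-module. But by Lemma~2.4(3) there is an isomorphism $M\otimes_\Gamma N\otimes_\Lambda I\cong I\oplus(X\otimes_\Lambda I)$, so $I$ is a direct summand of an injective $\Lambda$-module and is therefore injective. Part (2) is proved symmetrically, using the adjunction and splitting on the right-module side (replacing $\mathrm{T}_N$ by $-\otimes_\Lambda M$ and $\mathrm{T}_M$ by $-\otimes_\Gamma N$, which are again exact and projective-preserving since $M_\Gamma$, $N_\Lambda$, ${}_\Lambda M$, ${}_\Gamma N$ are all finitely generated projective), together with $J\otimes_\Lambda M\otimes_\Gamma N\cong J\oplus(J\otimes_\Lambda X)$.

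Part (3) is the flat analogue. Since ${}_\Gamma N$ is finitely generated projective, there is a natural isomorphism $N\otimes_\Lambda F\cong \Hom_\Lambda(N^{*},F)$ where $N^{*}=\Hom_\Gamma(N,\Gamma)$ is a finitely generated projective right... rather, I would argue more directly: flatness can be tested by exactness of $-\otimes$, and one can also characterize a flat module over a noetherian ring via $\Tor$. If $F$ is flat, then for any short exact sequence of right $\Gamma$-modules $0\to L'\to L\to L''\to 0$, applying $-\otimes_\Gamma N\otimes_\Lambda F$ and using that $-\otimes_\Gamma N$ is exact (as ${}_\Gamma N_\Lambda$ is projective on the left) followed by $-\otimes_\Lambda F$ exact, shows $N\otimes_\Lambda F$ is flat. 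Conversely, if $N\otimes_\Lambda F$ is flat, then $M\otimes_\Gamma(N\otimes_\Lambda F)$ is flat over $\Lambda$ by the same reasoning, and $F$ is a direct summand of it by Lemma~2.4(3), hence flat.

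The main obstacle I expect is bookkeeping about which one-sided structure is being used where: the adjunction isomorphisms $\Hom_\Gamma(\mathrm{T}_N(-),-)\cong\Hom_\Lambda(-,\mathrm{T}_M(-))$ must be invoked with the correct variances, and in part (2) one must be careful that the relevant functors on right modules are genuinely exact and projective-preserving, which comes down to the finitely-generated-projective hypotheses on ${}_\Lambda M_\Gamma$ and ${}_\Gamma N_\Lambda$ recorded in Definition~2.1 and Lemma~2.4. Once those naturality-and-variance details are pinned down, each of the three equivalences is a two-line argument: ``apply an exact functor to one side, apply the splitting isomorphism to recover the original module as a summand'' for the converse, and ``rewrite $\Hom$ via adjunction as a composite of exact functors'' for the forward direction.
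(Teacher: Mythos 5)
Your proposal is correct and follows essentially the same route as the paper: forward directions via the adjunction isomorphism (your Hom-exactness formulation and the paper's $\Ext$-vanishing formulation are the same argument in different clothing), and converse directions by applying the companion tensor functor and then splitting off the original module as a direct summand of $M\otimes_\Gamma N\otimes_\Lambda(-)$ via Lemma 2.4(3). Part (3) is also handled identically on both sides.
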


\begin{proof}
(1) Since $M_{\Gamma}$ is projective and $(M\otimes_{\Gamma}-, N\otimes_{\Lambda}-)$ is an adjoint pair,
then for any $i\geq 1$ and any $\Gamma$-module $K$, there is an isomorphism $\Ext_{\Gamma}^i(K, N\otimes_{\Lambda}I)\cong \Ext_{\Lambda}^i(M\otimes_{\Gamma}K, I)=0$,
 which yields that $N\otimes_{\Lambda}I$ is an injective $\Gamma$-module, as desired.

 Conversely, assume that $N\otimes_{\Lambda}I$ is an injective $\Gamma$-module, so is a $\Lambda$-module $M\otimes_{\Gamma}N\otimes_{\Lambda}I$. Notice that $I|M\otimes_{\Gamma}N\otimes_{\Lambda}I$, one gets the assertion.

(2)  The proof is similar to (1).

(3) Let $0\to A\to B\to C\to 0$ be an exact sequence of right $\Gamma$-module.
Since $-\otimes_{\Gamma}N$ is an exact functor and $F$ is flat, then
$0\to A\otimes_{\Gamma}N\otimes_{\Lambda}F\to B\otimes_{\Gamma}N\otimes_{\Lambda}F\to C\otimes_{\Gamma}N\otimes_{\Lambda}F\to 0$ is exact, which implies that $N\otimes_{\Lambda}F$ is flat.

 Conversely, assume that $N\otimes_{\Lambda}F$ is flat, so is $M\otimes_{\Gamma}N\otimes_{\Lambda}F$
 as $\Lambda$-module. Note that $F|M\otimes_{\Gamma}N\otimes_{\Lambda}F$. Hence $F$ is flat.
\end{proof}

\begin{corollary}
Let $\Lambda$ and $\Gamma$ be symmetrically separably equivalent induced by bimodules $_\Lambda M_\Gamma$ and $_\Gamma N_\Lambda$, and let $V$ be a $\Lambda$-module and $W$ a right $\Lambda$-module. Then we have

$\mathrm{(1)}$  $\id_{\Lambda}V=\id_{\Gamma}(N\otimes_{\Lambda}V);$

$\mathrm{(2)}$ $ \id W_{\Lambda}=\id (W\otimes_{\Lambda}M)_{\Gamma};$

$\mathrm{(3)}$  $\fd_{\Lambda}V=\fd_{\Gamma}(N\otimes_{\Lambda}V);$

$\mathrm{(4)}$  $\fd W_{\Lambda}=\fd(W\otimes_{\Lambda}M)_{\Gamma};$

$\mathrm{(5)}$ $\pd_{\Lambda}V=\pd_{\Gamma}(N\otimes_{\Lambda}V);$

$\mathrm{(6)}$  $\pd W_{\Lambda}=\pd_{\Gamma}(W\otimes_{\Lambda}M).$
\end{corollary}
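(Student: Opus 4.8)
The plan is to derive all six equalities in Corollary 4.2 as direct consequences of Lemma 4.1 together with the structural facts about the functors $\mathrm{T}_N = N\otimes_{\Lambda}-$ and $\mathrm{T}_M = M\otimes_{\Gamma}-$ recorded in Lemma 2.4, and I would handle (1), (3), (5) (the left-module statements) in parallel, then remark that (2), (4), (6) follow symmetrically by exchanging the roles of $M$ and $N$ and passing to right modules. The key observation making everything work is that $\mathrm{T}_N$ is exact, sends projectives to projectives (Lemma 2.4(2)), sends injectives to injectives and flats to flats (Lemma 4.1), is part of the adjoint pairs $(\mathrm{T}_M,\mathrm{T}_N)$ and $(\mathrm{T}_N,\mathrm{T}_M)$, and that $V$ is a direct summand of $\mathrm{T}_M\mathrm{T}_N(V) \cong V \oplus (X\otimes_{\Lambda}V)$ by Lemma 2.4(3); the analogous summand statement holds for $\mathrm{T}_N\mathrm{T}_M$ on the $\Gamma$-side.

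For the inequality $\id_{\Gamma}(N\otimes_{\Lambda}V) \le \id_{\Lambda}V$, I would take a finite injective coresolution $0 \to V \to E^0 \to E^1 \to \cdots \to E^n \to 0$ of minimal length $n = \id_{\Lambda}V$ (the case $\id_{\Lambda}V = \infty$ being trivial), apply the exact functor $\mathrm{T}_N$ to obtain an exact sequence $0 \to \mathrm{T}_N(V) \to \mathrm{T}_N(E^0) \to \cdots \to \mathrm{T}_N(E^n) \to 0$, and invoke Lemma 4.1(1) to see each $\mathrm{T}_N(E^i)$ is an injective $\Gamma$-module, so $\id_{\Gamma}(\mathrm{T}_N(V)) \le n$. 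For the reverse inequality, I would run the same argument with $\mathrm{T}_M$ applied to a minimal injective coresolution of $\mathrm{T}_N(V)$ over $\Gamma$, getting $\id_{\Lambda}(\mathrm{T}_M\mathrm{T}_N(V)) \le \id_{\Gamma}(\mathrm{T}_N(V))$; since injective dimension is additive over direct sums and $V \mid \mathrm{T}_M\mathrm{T}_N(V)$, we get $\id_{\Lambda}V \le \id_{\Lambda}(\mathrm{T}_M\mathrm{T}_N(V)) \le \id_{\Gamma}(\mathrm{T}_N(V))$, completing (1). Statements (3) and (5) are verbatim the same with ``injective coresolution'' replaced by ``flat resolution'' (using Lemma 4.1(3) and the fact that $\mathrm{T}_N,\mathrm{T}_M$ are exact, applied now covariantly to resolutions by subobjects) and ``projective resolution'' (using Lemma 2.4(2)); one only needs that $\mathrm{T}_N$ and $\mathrm{T}_M$ preserve the relevant class of modules and are exact, plus the summand relation.

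For (2), (4), (6) I would note that $(\mathrm{T}_M,\mathrm{T}_N)$ and $(\mathrm{T}_N,\mathrm{T}_M)$ being adjoint on the left-module categories is equivalent, after the usual $-\otimes M$ versus $M\otimes-$ bookkeeping, to the statement that $-\otimes_{\Lambda}M$ and $-\otimes_{\Gamma}N$ form a symmetrical separable equivalence between the categories of right modules (the bimodule isomorphisms $M\otimes_{\Gamma}N \cong \Lambda \oplus X$ and $N\otimes_{\Lambda}M \cong \Gamma \oplus Y$ are two-sided, so they serve equally on the right), and Lemma 4.1(2) already supplies the injectivity transfer on the right side. Then (2), (4), (6) are just (1), (3), (5) applied to this right-module symmetrical separable equivalence, with $W \mid W\otimes_{\Lambda}M\otimes_{\Gamma}N$ playing the role of the summand relation.

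I do not expect a genuine obstacle here; the proof is a routine ``apply an exact functor to a minimal resolution, then sandwich using the direct-summand relation'' argument, and the only point requiring a sentence of care is the additivity of $\id$, $\fd$, $\pd$ over finite direct sums (so that $V \mid \mathrm{T}_M\mathrm{T}_N(V)$ forces the dimension of $V$ to be bounded by that of $\mathrm{T}_M\mathrm{T}_N(V)$). A second minor point is to make sure each claimed preservation — injectives by $\mathrm{T}_N$ and $\mathrm{T}_M$, flats by both, projectives by both — is cited correctly from Lemma 2.4 or Lemma 4.1 in both directions, and the corollary can reasonably be stated with a one-paragraph proof that treats (1) in detail and says ``the remaining statements are proved analogously, using Lemma 4.1 and Lemma 2.4.''
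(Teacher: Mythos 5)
Your proposal is correct and matches the paper's proof in all essentials: take a minimal injective coresolution, apply the exact functor $\mathrm{T}_N$, use Lemma~4.1 to see the terms stay injective (giving one inequality), then apply $\mathrm{T}_M$ to a coresolution of $\mathrm{T}_N(V)$ and invoke $V \mid M\otimes_\Gamma N\otimes_\Lambda V$ from Lemma~2.4(3) to get the reverse inequality; the paper likewise proves (1) in full and dismisses (2)--(6) as analogous. One tiny verbal slip: for flat and projective dimensions you apply the exact functor to a (left) resolution, not a ``resolution by subobjects,'' and the parenthetical ``$\id_\Lambda V = \infty$ being trivial'' really rests on the reverse-inequality step rather than being trivial a priori --- but neither affects the substance.
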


\begin{proof}
We only prove (1), the proofs of $(2)-(6)$ are similar.

Without loss of generality, we may consider $\id_{\Lambda}V=n <\infty$. Then there is an exact sequence in $\Mod\Lambda$:
$0\to _\Lambda V\to E_0\to E_1\to \cdots \to E_n\to 0,$ with each $E_i$ injective.
Since $N$  is a projective right $\Lambda$-module,
 we get a new exact sequence
$$0\to N\otimes_\Lambda V \to N\otimes_{\Lambda} E_0\to N\otimes_{\Lambda} E_1\to \cdots \to N\otimes_{\Lambda} E_n\to 0$$
where  $N\otimes_{\Lambda} E_i$ is an  injective $\Gamma$-module for each $0\leq i\leq n$, by Lemma 4.1.
This means $\id_{\Gamma}(N\otimes_\Lambda V)\leq n.$

Conversely, assume that  $\id_{\Gamma}(N\otimes_\Lambda V) = m,$ then  $\id_{\Lambda}(M\otimes_{\Gamma}N\otimes_\Lambda V)\leq m.$
Notice that $_{\Lambda}V|_{\Lambda}(M\otimes_{\Gamma}N\otimes_\Lambda V)$ by Lemma 2.4(3),
it follows that $\id _{\Lambda}V \leq m$. The assertion follows immediately.
\end{proof}

\begin{corollary}
Let $\Lambda$ and $\Gamma$ be symmetrically separably equivalent. Then

$\mathrm{(1)}$ ~~$\id_{\Lambda}\Lambda = \id_{\Gamma}\Gamma;$

$\mathrm{(2)}$ ~~$\id\Lambda_{\Lambda} = \id\Gamma_{\Gamma}.$
\end{corollary}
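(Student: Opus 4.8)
The plan is to reduce both statements to Corollary 4.3 together with the fact (Lemma 2.4(1)) that the bimodules realizing the equivalence are finitely generated projective generators on one side.

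For (1), I would apply Corollary 4.3(1) to the $\Lambda$-module $V={_\Lambda}\Lambda$. Since $N\otimes_\Lambda\Lambda\cong{_\Gamma}N$ as left $\Gamma$-modules, this gives $\id_\Lambda\Lambda=\id_\Gamma N$, so it suffices to prove $\id_\Gamma N=\id_\Gamma\Gamma$. By Lemma 2.4(1), $_\Gamma N$ is a finitely generated projective generator, hence $\add_\Gamma N$ coincides with $\add_\Gamma\Gamma$, the category of finitely generated projective $\Gamma$-modules: $\Gamma$ is a direct summand of a finite direct sum of copies of $N$ because $N$ is a generator, and $N$ is a direct summand of a finite direct sum of copies of $\Gamma$ because $N$ is finitely generated projective. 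Since injective dimension satisfies $\id_\Gamma(A\oplus B)=\max\{\id_\Gamma A,\id_\Gamma B\}$ and does not increase on passing to direct summands, these two containments force $\id_\Gamma N=\id_\Gamma\Gamma$, and therefore $\id_\Lambda\Lambda=\id_\Gamma\Gamma$. Part (2) is the right-module mirror image: apply Corollary 4.3(2) to $W=\Lambda_\Lambda$, use $\Lambda\otimes_\Lambda M\cong M_\Gamma$ to get $\id\Lambda_\Lambda=\id M_\Gamma$, and note that $M_\Gamma$ is a finitely generated projective generator by Lemma 2.4(1), so $\add M_\Gamma=\add\Gamma_\Gamma$ and hence $\id M_\Gamma=\id\Gamma_\Gamma$ by the same argument.

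There is no substantial obstacle here. The only step that deserves a word of care is the identity $\id_\Gamma N=\id_\Gamma\Gamma$ (and its right-handed analogue), which is purely formal once one knows $\add_\Gamma N=\add_\Gamma\Gamma$ and uses only the elementary behaviour of injective dimension under finite direct sums and direct summands. As an alternative one could avoid the generator property of $N$ and $M$ altogether and instead run the symmetric version of Corollary 4.3, interchanging the roles of $\Lambda,\Gamma$ and of $M,N$, to obtain the two opposite inequalities $\id_\Lambda\Lambda\le\id_\Gamma\Gamma$ and $\id_\Gamma\Gamma\le\id_\Lambda\Lambda$; but the route through $\add_\Gamma N=\add_\Gamma\Gamma$ is the shortest.
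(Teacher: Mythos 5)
Your argument is correct and follows the same route as the paper: apply the dimension-shift result to $V={_\Lambda}\Lambda$ to get $\id_\Lambda\Lambda=\id_\Gamma N$, then use that $_\Gamma N$ is a finitely generated projective generator to conclude $\id_\Gamma N=\id_\Gamma\Gamma$, with the right-handed case dual. One small slip: you cite Corollary~4.3(1) and 4.3(2) as the input, but that is the statement you are proving; you mean Corollary~4.2(1) and 4.2(2).
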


\begin{proof}
 (1) According to Corollary 4.2(1), it follows that $\id_{\Lambda}\Lambda=\id_{\Gamma}N.$
Combining a fact that $_{\Gamma}N$ is a projective generator, one gets $\id_{\Lambda}\Lambda=\id_{\Gamma}\Gamma$.

 (2) is dual to (1).
\end{proof}

\begin{lemma}
Let $$0\to {_\Lambda \Lambda} \to I^{0}(\Lambda)\to I^{1}(\Lambda)\to \cdots \to I^{n}(\Lambda)\to \cdots \qquad (4.1)$$
and
$$0\to {_\Gamma\Gamma} \to I^{0}(\Gamma)\to I^{1}(\Gamma)\to \cdots \to I^{n}(\Gamma)\to \cdots  \qquad (4.2)$$
be the minimal injective resolution of $ _\Lambda \Lambda$ and of $_\Gamma\Gamma$, respectively.
For any $i\geq 0$, we have

$\mathrm{(1)}$ ~~$\fd_{\Lambda}I^{i}(\Lambda) = \fd_{\Gamma}I^{i}(\Gamma);$

$\mathrm{(2)}$ ~~$\pd_{\Lambda}I^{i}(\Lambda) = \pd_{\Lambda}I^{i}(\Gamma).$
\end{lemma}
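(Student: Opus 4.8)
The plan is to push the minimal injective coresolutions (4.1) and (4.2) through the exact functors $N\otimes_{\Lambda}-$ and $M\otimes_{\Gamma}-$, compare the outcomes, and then read off the two dimensions from Corollary~4.2. Throughout, write $I^{\bullet}(\Lambda)$ and $I^{\bullet}(\Gamma)$ for the complexes in (4.1) and (4.2).

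First I would note that, since $N\otimes_{\Lambda}-$ is exact (Lemma~2.4(2)) and carries injective $\Lambda$-modules to injective $\Gamma$-modules (Lemma~4.1(1)), applying it to (4.1) produces an injective coresolution
$$0\to N\to N\otimes_{\Lambda}I^{0}(\Lambda)\to N\otimes_{\Lambda}I^{1}(\Lambda)\to\cdots$$
of the $\Gamma$-module $N=N\otimes_{\Lambda}\Lambda$, which need not be minimal. Over a left noetherian ring, any injective coresolution of a module contains its minimal injective coresolution as a direct summand of complexes, with contractible complement; hence $I^{i}(N)\,|\,N\otimes_{\Lambda}I^{i}(\Lambda)$ for every $i$, and so $\fd_{\Gamma}I^{i}(N)\le\fd_{\Gamma}(N\otimes_{\Lambda}I^{i}(\Lambda))$ and $\pd_{\Gamma}I^{i}(N)\le\pd_{\Gamma}(N\otimes_{\Lambda}I^{i}(\Lambda))$.

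Next I would replace $I^{i}(N)$ by $I^{i}(\Gamma)$. Because $_{\Gamma}N$ is a finitely generated projective generator (Lemma~2.4(1)), one has $\Gamma\,|\,N^{(k)}$ and $N\,|\,\Gamma^{(l)}$ for suitable $k,l$, i.e.\ $\add_{\Gamma}N=\add_{\Gamma}\Gamma$; since minimal injective coresolutions are additive and commute with finite direct sums, this forces $\add_{\Gamma}I^{i}(N)=\add_{\Gamma}I^{i}(\Gamma)$, whence $\fd_{\Gamma}I^{i}(N)=\fd_{\Gamma}I^{i}(\Gamma)$ and $\pd_{\Gamma}I^{i}(N)=\pd_{\Gamma}I^{i}(\Gamma)$. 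Feeding this into the previous step together with Corollary~4.2(3) and (5) applied to $V=I^{i}(\Lambda)$, which give $\fd_{\Lambda}I^{i}(\Lambda)=\fd_{\Gamma}(N\otimes_{\Lambda}I^{i}(\Lambda))$ and $\pd_{\Lambda}I^{i}(\Lambda)=\pd_{\Gamma}(N\otimes_{\Lambda}I^{i}(\Lambda))$, I obtain $\fd_{\Gamma}I^{i}(\Gamma)\le\fd_{\Lambda}I^{i}(\Lambda)$ and $\pd_{\Gamma}I^{i}(\Gamma)\le\pd_{\Lambda}I^{i}(\Lambda)$.

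Finally I would run the argument in the other direction. As a symmetrical separable equivalence is symmetric in $(\Lambda,M)$ and $(\Gamma,N)$, applying $M\otimes_{\Gamma}-$ to (4.2), using that $_{\Lambda}M$ is a finitely generated projective generator, Lemma~4.1(1) read the other way, and the analogue of Corollary~4.2 for $M\otimes_{\Gamma}-$, yields the reverse inequalities $\fd_{\Lambda}I^{i}(\Lambda)\le\fd_{\Gamma}I^{i}(\Gamma)$ and $\pd_{\Lambda}I^{i}(\Lambda)\le\pd_{\Gamma}I^{i}(\Gamma)$; combining the two directions gives the asserted equalities. The one point that needs care --- and the natural obstacle --- is the first step: the complex produced from a minimal injective coresolution by an exact, injective-preserving functor is generally not minimal, so one must first split off the minimal injective coresolution of $N$ as a direct summand before the $\add$-bookkeeping and Corollary~4.2 can be invoked; everything else is formal.
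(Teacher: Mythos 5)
Your proof is correct and takes essentially the same approach as the paper: push the minimal injective coresolution of $_\Lambda\Lambda$ through the exact, injective-preserving functor $N\otimes_\Lambda-$, relate the result to the minimal injective coresolution of $_\Gamma\Gamma$, and invoke Corollary~4.2 to transfer flat and projective dimensions, then argue symmetrically with $M\otimes_\Gamma-$. The paper compresses the middle step to the assertion ``$I^i(\Gamma)\mid N\otimes_\Lambda I^i(\Lambda)$, which is not hard to check''; your factorization through $I^\bullet(N)$ and the observation $\add_\Gamma N=\add_\Gamma\Gamma$ is a clean way to supply that verification.
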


\begin{proof} We only prove (1), and the proof of (2) is similar.

Applying the exact functor $ N\otimes_{\Lambda}-$ to the injective resolution  (4.1) induces an exact sequence
$$0\to _\Gamma N \to N\otimes_{\Lambda}I^0(\Lambda)\to N\otimes_{\Lambda}I^1(\Lambda)\to \cdots \to N\otimes_{\Lambda}I^n(\Lambda)\to \cdots  $$
where $N\otimes_{\Lambda}I^i(\Lambda)$ is an injective $\Gamma$-module  for all $i\geq 0$ by Lemma 4.1.
Note that the sequence (4.2) is the minimal injective resolution of $_{\Gamma}\Gamma$ and $N$ is a projective generator for $\Gamma$-modules  by Lemma 2.4(1), it is not hard to check that $I^{i}(\Gamma)|N\otimes_{\Lambda}I^{i}(\Lambda) $ for all $i\geq 0$.
Hence, for each $i$, we have $\fd_{\Lambda}I^{i}(\Gamma)\leq \fd_{\Gamma}(N\otimes_{\Lambda}I^{i}(\Lambda)) =\fd_{\Lambda}I^{i}(\Lambda)$ by Corollary 4.2.

Dually, we have  $\fd_{\Lambda}I^{i}(\Lambda)\leq\fd_{\Gamma}I^{i}(\Gamma)$ for any $i\geq 0$.
\end{proof}

\begin{theorem} Let $\Lambda$ and $\Gamma$ be symmetrically separably equivalent and let $n, k$ be integers. Then
 $\Lambda$ is $G_n(k)$ if and only if so is $\Gamma$.
\end{theorem}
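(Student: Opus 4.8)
The plan is to reduce the statement directly to Lemma 4.4, which already packages the essential homological input: writing $0\to {}_\Lambda\Lambda\to I^0(\Lambda)\to I^1(\Lambda)\to\cdots$ and $0\to {}_\Gamma\Gamma\to I^0(\Gamma)\to I^1(\Gamma)\to\cdots$ for the minimal injective coresolutions of $_\Lambda\Lambda$ and $_\Gamma\Gamma$, one has $\fd_\Lambda I^i(\Lambda)=\fd_\Gamma I^i(\Gamma)$ for every $i\geq 0$. Granting this, the equivalence of the two Auslander-type conditions is just a matter of unwinding the definition of $G_n(k)$.

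First I would recall that $\Lambda$ is $G_n(k)$ precisely when $\fd_\Lambda I^i(\Lambda)\leq k+i$ for all $0\leq i\leq n-1$. Assume $\Lambda$ is $G_n(k)$. Then for each such $i$, Lemma 4.4 yields $\fd_\Gamma I^i(\Gamma)=\fd_\Lambda I^i(\Lambda)\leq k+i$, so $\Gamma$ is $G_n(k)$. The reverse implication follows by the same argument with the roles of $\Lambda$ and $\Gamma$ interchanged; this is legitimate since a symmetrical separable equivalence is a symmetric relation, obtained by swapping the bimodules $_\Lambda M_\Gamma$ and $_\Gamma N_\Lambda$ and the corresponding adjoint pairs.

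The only subtlety — and it is already absorbed into Lemma 4.4, so from the present vantage point it is not an obstacle — is that the minimal injective coresolution of $_\Gamma\Gamma$ is not literally $N\otimes_\Lambda-$ applied to that of $_\Lambda\Lambda$. What one actually has is an injective coresolution of $_\Gamma N$ gotten by applying the exact functor $N\otimes_\Lambda-$, together with the summand relation $I^i(\Gamma)\mid N\otimes_\Lambda I^i(\Lambda)$ coming from the fact that $_\Gamma N$ is a projective generator; then Corollary 4.2 (flat dimension is preserved under $N\otimes_\Lambda-$ and passes to direct summands) lets one compare $\fd_\Gamma I^i(\Gamma)$ with $\fd_\Lambda I^i(\Lambda)$ in both directions. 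Consequently, beyond citing Lemma 4.4, no genuinely new work is required: the proof is a one-line deduction in each direction, and I expect the whole argument to be essentially immediate.
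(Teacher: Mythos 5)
Your proof is correct and follows exactly the same route as the paper: cite Lemma~4.4 to get $\fd_\Lambda I^i(\Lambda)=\fd_\Gamma I^i(\Gamma)$ for all $i$, then unwind the definition of $G_n(k)$ in both directions. The extra paragraph recapitulating how Lemma~4.4 is proved is accurate but not needed once that lemma is in hand.
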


\begin{proof}
Suppose that $\Lambda$ satisfies $G_n(k)$ , then we have $\fd I^i(\Lambda) \leq i+k$ for any $0\leq i\leq n-1$.
It follows from Lemma 4.4 that $\fd I^i(\Gamma) \leq i+k$ for any $0\leq i\leq n-1$. Thus, we verify that $\Gamma$ satisfies $G_n(k)$.

Similarly, it can be verify that  $\Lambda$ is $G_n(k)$ when $\Gamma$ is so.
\end{proof}

Let $n$ be a positive integer. Following \cite{Hu1}, a left and right noetherian ring $\Lambda$ is {\it left quasi $n$-Gorenstein } if
the left flat dimension of the $i$th term in the minimal injective resolution of $\Lambda$ as a left $\Lambda$-module is less than or equal to
$i+1$ for any $0\leq i\leq n-1$, which is a nontrivial generalization of $n$-Gorenstein rings. It is easy to see that a left and right
noetherian ring is a left quasi $n$-Gorenstein if and only if it is $G_{n}(1)$. The left quasi $n$-Gorenstein ring plays an important role in the
studying of the Auslander Gorenstein conjecture and the Nakayama conjecture (see \cite{Hu1,Hu2} for detail).

 According to Theorem 4.5, we have the following corollary.

\begin{corollary} Let $\Lambda$ and $\Gamma$ be symmetrically separably equivalent,
 and let $n$ be a nonnegative integer. Then

$\mathrm{(1)}$ $\Lambda$ is $n$-Gorenstein if and only if so is $\Gamma$;

$\mathrm{(2)}$ $\Lambda$ is quasi $n$-Gorenstein if and only if so is $\Gamma$;

$\mathrm{(3)}$ $\Lambda$ satisfies the Auslander condition if and only if so does $\Gamma$.
\end{corollary}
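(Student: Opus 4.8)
The plan is to read off all three statements directly from Theorem 4.5, since each of the three properties named is, by the bookkeeping set up in Section 2 and in the paragraph preceding the corollary, merely a special instance of the $G_n(k)$ condition. No homological input beyond Theorem 4.5 (which itself rests on Lemma 4.4 and Corollary 4.2) should be required.

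First I would recall that, as observed just after the definition of $n$-Gorenstein rings, $\Lambda$ is $n$-Gorenstein if and only if $\Lambda$ is $G_n(0)$. Applying Theorem 4.5 with $k=0$ then gives that $\Lambda$ is $G_n(0)$ precisely when $\Gamma$ is, which is exactly part (1). For part (2), I would invoke the characterisation stated immediately before the corollary, namely that a left and right noetherian ring is left quasi $n$-Gorenstein if and only if it is $G_n(1)$; applying Theorem 4.5 with $k=1$ then yields the desired equivalence. For part (3), recall that $\Lambda$ satisfies the Auslander condition exactly when $\Lambda$ is $n$-Gorenstein (equivalently $G_n(0)$) for every $n$, so by part (1), used for each $n$ separately, $\Lambda$ satisfies the Auslander condition if and only if $\Gamma$ does.

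I do not expect any genuine obstacle here: the whole argument is a matter of matching the definitions of the three properties to the parameter values $k=0$ and $k=1$ in Theorem 4.5 and then quantifying over $n$. The only point warranting a moment's care is the interchange of the universal quantifier over $n$ with the per-$n$ equivalence in part (3), and this is immediate.
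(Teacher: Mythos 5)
Your proposal is correct and follows exactly the paper's own argument: identify $n$-Gorenstein with $G_n(0)$ and quasi $n$-Gorenstein with $G_n(1)$, apply Theorem 4.5 for parts (1) and (2), and then deduce (3) by quantifying part (1) over all $n$.
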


\begin{proof}
Note that $\Lambda$ is $n$-Gorenstein if and only if $\Lambda$ is $G_n(0)$, and that $\Lambda$ is quasi $n$-Gorenstein if and only if $\Lambda$ is $G_n(1)$. Thus (1) and (2)  are obtained from Theorem 4.5 directly.
(3) follows from (1) directly.
\end{proof}

Finally, we shall show that the symmetrical separable equivalence preserves the Gorenstein symmetric conjecture, the (strong) Nakayama conjecture and the Auslander Gorenstein conjecture.

\begin{theorem} Let $\Lambda$ and $\Gamma$ be Artin $R$-algebras.

 $\mathrm{(1)}$ $\Lambda$ satisfies the Gorenstein symmetric conjecture if and only if so does $\Gamma$;

$\mathrm{(2)}$ $\Lambda$ satisfies the strong Nakayama conjecture if and only if so does $\Gamma$.

$\mathrm{(3)}$  $\Lambda$ satisfies the Auslander-Gorenstein conjecture  if and only if so does $\Gamma$.

$\mathrm{(4)}$  $\Lambda$ satisfies the Nakayama conjecture  if and only if so does $\Gamma$.
\end{theorem}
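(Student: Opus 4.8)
The plan is to deduce each of the four statements from invariance results already in hand, so that in every case the argument splits into a vacuous case and a substantive case. As throughout this section, we assume $\Lambda$ and $\Gamma$ are symmetrically separably equivalent, induced by bimodules $_{\Lambda}M_{\Gamma}$ and $_{\Gamma}N_{\Lambda}$ together with the attendant bimodules $_{\Lambda}X_{\Lambda}$ and $_{\Gamma}Y_{\Gamma}$.

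First I would assemble the facts that carry the weight. By Corollary 4.3 the one-sided self-injective dimensions are invariant, $\id_{\Lambda}\Lambda=\id_{\Gamma}\Gamma$ and $\id\Lambda_{\Lambda}=\id\Gamma_{\Gamma}$; in particular $\Lambda$ is self-injective (that is, $\id_{\Lambda}\Lambda=0$) if and only if $\Gamma$ is, and $\Lambda$ is Gorenstein (both self-injective dimensions finite) if and only if $\Gamma$ is. By Lemma 4.4(2) one has $\pd_{\Lambda}I^{i}(\Lambda)=\pd_{\Gamma}I^{i}(\Gamma)$ for all $i$; since a term of a minimal injective coresolution is injective-projective exactly when it is projective, this gives $\mathrm{dom.dim}\,\Lambda=\mathrm{dom.dim}\,\Gamma$. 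Finally, Corollary 4.6(3) says $\Lambda$ satisfies the Auslander condition if and only if $\Gamma$ does.

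Granting these, (1) is immediate, since $\Lambda$ satisfies the Gorenstein symmetric conjecture precisely when $\id_{\Lambda}\Lambda=\id\Lambda_{\Lambda}$, and both sides are invariant. For (3): if neither algebra satisfies the Auslander condition, the conjecture holds vacuously for both; otherwise ``$\Lambda$ satisfies the Auslander--Gorenstein conjecture'' is equivalent to ``$\Lambda$ is Gorenstein'', which by the previous paragraph is equivalent to the corresponding assertion for $\Gamma$. Statement (4) is handled the same way, with dominant dimension in place of the Auslander condition and self-injectivity in place of the Gorenstein property: if $\mathrm{dom.dim}<\infty$ the conjecture is vacuous for both, and if $\mathrm{dom.dim}=\infty$ then ``$\Lambda$ satisfies the Nakayama conjecture'' amounts to ``$\Lambda$ is self-injective'', hence is equivalent to the same for $\Gamma$.

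The one part that needs a genuine argument is (2), which I would prove by contraposition. Suppose $\Gamma$ fails the strong Nakayama conjecture, witnessed by a nonzero finitely generated $\Gamma$-module $W$ with $\Ext^{\geq 0}_{\Gamma}(W,\Gamma)=0$; I claim $M\otimes_{\Gamma}W$ witnesses the same failure for $\Lambda$. It is a nonzero finitely generated $\Lambda$-module because $N\otimes_{\Lambda}M\otimes_{\Gamma}W\cong W\oplus(Y\otimes_{\Gamma}W)$ is nonzero by Lemma 2.4(3). Since $M_{\Gamma}$ is projective and $M\otimes_{\Gamma}-$ is exact and sends projectives to projectives (Lemma 2.4), a projective resolution of $W$ maps to one of $M\otimes_{\Gamma}W$; combining this with the adjunction $(M\otimes_{\Gamma}-,\ N\otimes_{\Lambda}-)$ yields $\Ext^{i}_{\Lambda}(M\otimes_{\Gamma}W,\Lambda)\cong\Ext^{i}_{\Gamma}(W, N\otimes_{\Lambda}\Lambda)\cong\Ext^{i}_{\Gamma}(W,N)$ for every $i\geq 0$. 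As $_{\Gamma}N$ is projective it is a direct summand of a free module, so $\Ext^{i}_{\Gamma}(W,N)$ is a direct summand of a power of $\Ext^{i}_{\Gamma}(W,\Gamma)=0$ and hence vanishes. Thus $\Ext^{\geq 0}_{\Lambda}(M\otimes_{\Gamma}W,\Lambda)=0$ with $M\otimes_{\Gamma}W\neq 0$, contradicting the strong Nakayama conjecture for $\Lambda$; by the symmetry of the hypothesis the equivalence follows. The main (and only mild) obstacle here is bookkeeping: keeping the two adjoint pairs straight and checking that the $\Ext$-comparison isomorphism is legitimate, that is, that $M\otimes_{\Gamma}-$ genuinely transports projective resolutions; once that is in place the rest is routine.
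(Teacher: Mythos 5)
Your argument is correct and follows essentially the same route as the paper's: (1) via Corollary~4.3, (3) via Corollary~4.6(3) together with invariance of the Gorenstein property, (4) via Lemma~4.4(2) for dominant dimension together with invariance of self-injectivity, and (2) via the adjunction and the projective-generator property. The only cosmetic difference is in (2), where you argue by contraposition transporting a nonzero test module from $\Gamma$ to $\Lambda$ along $M\otimes_\Gamma-$ and using the adjoint pair $(M\otimes_\Gamma-, N\otimes_\Lambda-)$, whereas the paper argues directly by transporting from $\Lambda$ to $\Gamma$ along $N\otimes_\Lambda-$ via the adjoint pair $(N\otimes_\Lambda-, M\otimes_\Gamma-)$; these are mirror images of the same computation.
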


\begin{proof}
(1) According to Corollary 4.3, it follows that $\id_{\Lambda}\Lambda=\id\Lambda_{\Lambda}$ if and only if $\id_{\Gamma}\Gamma=\id\Gamma_{\Gamma}$, and the assertion follows.

(2) Assume that $\Gamma$ satisfies the strong Nakayama conjecture. Let $V$ be a $\Lambda$-module with $\Ext_{\Lambda}^{\geq 0}(V,\Lambda)=0$.
It is not hard to check that $\Ext_{\Lambda}^{\geq 0}(V, P)=0$ for any finitely generated projective $\Lambda$-module $P$.
Since $N_\Lambda$ is projective and $(N\otimes_{\Lambda}-, M\otimes_{\Gamma}-)$ is an adjoint pair,
then for any $i\geq 0$, $\Ext_{\Gamma}^i(N\otimes_{\Lambda}V, \Gamma)\cong \Ext_{\Lambda}^i(V, M)=0$.
Therefore, one has $N\otimes_\Lambda V=0$ by assumption, which implies $V=0$, because $N_{\Lambda}$ is a projective generator for right $\Lambda$-modules.
Thus $\Lambda$ satisfies the strong Nakayama conjecture.

Similarly, we can prove that $\Gamma$ satisfies the strong Nakayama conjecture, when $\Lambda$ does so.

(3) Assume that $\Gamma$ satisfies the Auslander-Gorenstein conjecture and that $\Lambda$ satisfies the Auslander condition. Due to Corollary 4.6(3), it follows that $\Gamma$ satisfies the Auslander condition, and hence one has that $\Gamma$ is a Gorenstein algebra ({\it that is,}
$\id_{\Gamma}\Gamma <\infty$ and $\id\Gamma_{\Gamma}<\infty$ ) by assumption,  which implies  that $\Lambda$ is a Gorenstein algebra by Corollary 4.3.

Similarly, we can prove that $\Gamma$ satisfies the Auslander-Gorenstein conjecture, when $\Lambda$ does so.

(4) Assume that $\Gamma$ satisfies the Nakayama conjecture and dom.dim$\Lambda=\infty$. Then one gets dom.dim$\Gamma=\infty$ by Lemma 4.4(2).
 so that $\Gamma$ is self-injective by assumption. It follows from Corollary 4.3
that $\Lambda$ is self-injective, as desired.

Similarly, we can prove that $\Gamma$ satisfies the Nakayama  conjecture, when $\Lambda$ does so.
\end{proof}

As a consequence of Theorem 4.7, we obtain the following corollary, which improves \cite[Theorem 3.3 (2)]{Z}.

 \begin{corollary}
 Let $\Lambda$ be an Artin algebra, and let $G$ be a finite group acting on $\Lambda$, whose order is invertible in $\Lambda$.
Then $\Lambda$ satisfies the strong Nakayama conjecture if and only if so does the skew group $\Lambda G$.
 \end{corollary}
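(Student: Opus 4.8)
The plan is to derive this corollary directly from Theorem~4.7(2), so the only real work is to put the pair $(\Lambda,\Lambda G)$ into the framework of symmetrical separable equivalences. This is precisely Example~2.3(3), which records \cite[Lemma~1]{BT}: since the order of $G$ is invertible in $\Lambda$, the skew group algebra $\Lambda G$ and $\Lambda$ are symmetrically separably equivalent, with the equivalence induced by the bimodules $_{\Lambda G}(\Lambda G)_{\Lambda}$ and $_{\Lambda}(\Lambda G)_{\Lambda G}$. Both are finitely generated and projective on either side, and the averaging idempotent $e=\tfrac{1}{|G|}\sum_{g\in G}g$ supplies the bimodule splittings of $(\Lambda G)\otimes_{\Lambda}(\Lambda G)$ and $(\Lambda G)\otimes_{\Lambda G}(\Lambda G)$ together with the required adjunctions.

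First I would observe that $\Lambda G$ is again an Artin $R$-algebra over the same commutative Artin base ring $R$ as $\Lambda$, so the standing hypotheses of Theorem~4.7 (``$\Lambda$ and $\Gamma$ are Artin $R$-algebras'') are genuinely met by the pair $(\Lambda,\Lambda G)$. Then I would simply invoke Theorem~4.7(2): a symmetrical separable equivalence transfers the strong Nakayama conjecture in both directions. Composing these two facts yields at once that $\Lambda$ satisfies the strong Nakayama conjecture if and only if $\Lambda G$ does.

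I do not expect any genuine obstacle; the statement is a one-line consequence of the machinery already assembled. The only points worth a remark are bookkeeping ones: confirming that the base ring of $\Lambda G$ is $R$ so that the ``Artin $R$-algebra'' hypothesis of Theorem~4.7 applies, and noting that part~(2) of Theorem~4.7 requires merely a symmetrical separable equivalence — with no centrally-projective condition such as the one imposed in Theorem~3.11 — so that the bare conclusion of Example~2.3(3) is already enough. Finally I would point out, as in the statement, that this sharpens \cite[Theorem~3.3(2)]{Z}, since here no extra hypotheses beyond invertibility of $|G|$ are needed.
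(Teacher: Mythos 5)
Your proposal is correct and follows the exact route the paper intends: the corollary is stated as an immediate consequence of Theorem~4.7(2) once one recalls from Example~2.3(3) (citing \cite[Lemma~1]{BT}) that $\Lambda$ and $\Lambda G$ are symmetrically separably equivalent when $|G|$ is invertible in $\Lambda$. The paper omits the explicit proof, and you have supplied precisely the reasoning it leaves implicit, including the sensible bookkeeping check that $\Lambda G$ is again an Artin $R$-algebra.
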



\section{Declarations}
 
\bmhead{Ethical Approval} 

Not applicable.
 
\bmhead{Funding} 

This research was partially supported by the National Natural Science Foundation of China (Grant No. 12061026) and Foundation for University Key Teacher by Henan Province (2019GGJS204).
 
\bmhead{Availability of data and materials} 

Not applicable.
 


\end{document}